\documentclass[11pt]{article}

\usepackage{amsmath,amsthm,amssymb}

\usepackage{graphicx}
\graphicspath{{pictures/}} 

\usepackage{geometry}
\geometry{left=2.2cm,right=2.2cm,top=1.8cm,bottom=1.8cm}

\newtheorem{thm}{Theorem}

\numberwithin{equation}{section}

\usepackage{booktabs} 

\setcounter{topnumber}{4}
\setcounter{bottomnumber}{4}
\setcounter{totalnumber}{4}
\setcounter{dbltopnumber}{4}

\setlength{\floatsep}{12pt plus 6pt minus 4pt}
\setlength{\textfloatsep}{15pt plus 8pt minus 5pt}
\setlength{\intextsep}{12pt plus 6pt minus 4pt}
\setlength{\dblfloatsep}{12pt plus 6pt minus 4pt}
\setlength{\dbltextfloatsep}{15pt plus 8pt minus 5pt}






\title{An optimal class of eighth-order iterative methods based on Kung and Traub's method with its dynamics}

\author{Gunar Matthies $^a$\thanks{gunar.matthies@tu-dresden.de} 
\and 
Mehdi Salimi $^{b}$\thanks{mehdi.salimi@tu-dresden.de} 
\and 
Somayeh Sharifi $^{c}$\thanks{s.sharifi@iauh.ac.ir}
\and
Juan Luis Varona $^{d}$\thanks{jvarona@unirioja.es}\\[9pt]
\small
$^{a}$Institut f{\"u}r Numerische Mathematik, Technische Universit{\"a}t Dresden, Germany\\[-3pt]
\small
$^{b}$Center for Dynamics, Department of Mathematics, Technische Universit{\"a}t Dresden, Germany\\[-3pt]
\small
$^{c}$Young Researchers and Elite Club, Hamedan Branch, Islamic Azad University, Hamedan, Iran\\[-3pt]
\small
$^{d}$Departamento de Matem{\'a}ticas y Computaci{\'o}n, Universidad de La Rioja, Logro{\~n}o, Spain
}

\date{}

\begin{document}

\maketitle

\begin{abstract}
In this paper, we present a three-point without memory
iterative method based on Kung and Traub's method for solving
non-linear equations in one variable. The proposed method has
eighth-order convergence and costs only four function evaluations
each iteration which supports the Kung-Traub conjecture on the
optimal order of convergence. Consequently, this method possesses
very high computational efficiency. We present the construction,
the convergence analysis, and the numerical implementation of the
method. Furthermore, comparisons with some other existing optimal
eighth-order methods concerning accuracy and basins of attraction
for several test problems will be given.
\medskip

\noindent \textbf{Keywords}: Multi-point iterative methods; Simple
root; Order of convergence; Kung and Traub's conjecture;
Basins of attraction.
\medskip

\noindent \textbf{Mathematics Subject Classification}: 65H05, 37F10
\end{abstract}

\section{Introduction}

Solving nonlinear equations is a basic and extremely valuable tool
in all fields in science and engineering. One can distinguish
between two general approaches for solving nonlinear equations
numerically, namely, one-step and multi-step methods. Multi-step
methods overcome some computational issues encountered with
one-step iterative methods. Typically they allow us to achieve a
greater accuracy with the same number of function evaluations.
Important aspects related to these methods are order of
convergence and optimality. Therefore, it is favorable to attain
with fixed number of function evaluations each iteration step
a convergence order which is as high as possible. A central role
in this context plays the unproved conjecture by Kung and Traub~\cite{Kung}.
It states that an optimal multi-step method without memory which uses
$k+1$ evaluations could achieve a convergence order of~$2^{k}$.
Considering this conjecture, many optimal two-step and three-step
methods have been presented.

In the recent years, a large number of multi-step methods for finding
simple roots $x^*$ of a nonlinear equation $f(x)=0$ with a scalar function
$f: D\subset\mathbb{R} \to \mathbb{R}$ which is defined on
an open interval~$D$ (or $f: D\subset\mathbb{C} \to \mathbb{C}$
defined on a region $D$ in the complex plane~$\mathbb{C}$) have been developed
and analyzed for improving the convergence order of classical methods.
The Newton-Raphson iteration $x_{n+1} := x_n - \frac{f(x_n)}{f'(x_n)}$
is probably the most widely used algorithm for finding roots.
It is of second order and requires two evaluations for each iteration
step, one evaluation of $f$ and one of~$f'$.
The Newton-Raphson iteration is an example of a one-point
iteration, i.e., in each iteration step the evaluations are taken
at a single point. The basic optimality theorem for one-point
iterations (see Traub~\cite[\S\,5.4]{Traub} or an improved
proof in~\cite{Kung}) shows that an analytic one-point iteration based
on $k$ evaluations is of order at most~$k$. Thus, the Newton-Raphson
iteration is an optimal one-point method with~$k=2$.

Some well known two-point methods without memory are described
e.g.\ in Jarratt~\cite{Jarratt}, King~\cite{King}, and
Ostrowski~\cite{Ostrowski}. Using inverse interpolation, Kung and
Traub \cite{Kung} constructed two general optimal classes without
memory. Since then, there have been many attempts to construct
optimal multi-point methods, utilizing e.g.\ weight functions, see
in particular~\cite{Babajee, Bi, Chun1, Lotfi2, Petkovic,
Sharifi1, Sharifi2, Sharma1, Wang}. Here, we will construct a
class of eighth-order methods free from second order derivatives
with efficiency index $\sqrt[4]{8} \simeq 1.68179$; recall that
the efficiency index of an iterative method of order $p$ requiring
$k$ function evaluations per iteration step is defined by $E(k,p)
= \sqrt[k]{p}$, see~\cite{Ostrowski}.

The paper is organized as follows: Section~\ref{sec:description}
is devoted to introduce the ideas for the construction of the new
optimal class of eighth-order methods based on Kung and Traub's method
by using a Newton-step and suitable weight functions.
In Section~\ref{sec:convergence} we give the details of
the new methods and investigate the convergence order;
this allows to present a class of optimal three-point
methods by using suitable weight functions.
Particularizing the weight functions we construct a three-parametric
family of eighth-order optimal iterative root-finding methods.
By assigning particular values to these parameters we propose
two examples for this kind of methods.
Numerical performance and comparisons with
other methods are illustrated in Section~\ref{sec:examples}.
In Section~\ref{sec:dynamic} we approximate and visualize the basins
of attraction of the proposed method and compare them with several
existing methods, both graphically and by mean of some numerical
measures. Finally, a conclusion is provided in Section~\ref{sec:conclusion}.

\section{Description of the method}
\label{sec:description}

In this section we construct a new optimal three-point class of
iterative methods for solving nonlinear equations based on Kung
and Traub's method~\cite{Kung}.

The Kung and Traub's method is given by
\begin{equation}
\label{a1}
\left\{
\begin{aligned}
y_{n} & := x_{n} - \dfrac{f(x_{n})}{f'(x_{n})}, \\[1.8ex]
x_{n+1} & := y_{n} - \dfrac{f(x_n)f(y_n)}{(f(x_n)-f(y_n))^2} \cdot
\dfrac{f(x_n)}{f'(x_n)}.
\end{aligned}
\right.
\end{equation}
In~\eqref{a1} and all forthcoming methods, the iteration rule is used for
$n = 0, 1, \dots$ and $x_0$ denotes an initial approximation of the simple
root~$x^{*}$. The convergence order
of \eqref{a1} is four with three function evaluations in each
iteration step. Hence, this method is optimal. We intend to increase the
order of convergence and extend \eqref{a1} by mean of an
additional Newton step
\begin{equation}
\label{a2}
\left\{
\begin{aligned}
y_{n} & := x_{n} - \dfrac{f(x_{n})}{f'(x_{n})}, \\[1.8ex]
z_{n} & := y_{n} - \dfrac{f(x_n)f(y_n)}{(f(x_n)-f(y_n))^2}\cdot\dfrac{f(x_n)}{f'(x_n)}, \\[1.8ex]
x_{n+1} & := z_{n} - \dfrac{f(z_n)}{f'(z_n)}.
\end{aligned}
\right.
\end{equation}
Method \eqref{a2} uses five function evaluations with convergence order
eight. Consequently, this method is not optimal. In order
to decrease the number of function evaluations, we are going to approximate
$f'(z_n)$ by an expression based on $f(x_n)$, $f(y_n)$, $f(z_n)$,
and~$f'(x_n)$, namely
\begin{equation*}
f'(z_n) \approx \frac{f'(x_n)}{J(t_n,u_n)G(s_n)},
\end{equation*}
with
$t_n := \frac{f(y_n)}{f(x_n)}$, $u_n := \frac{f(z_n)}{f(x_n)}$,
$s_n := \frac{f(z_n)}{f(y_n)}$, and suitable functions $J$ and~$G$.

Therefore, we have
\begin{equation}
\label{a3}
\left\{
\begin{aligned}
y_{n} & := x_{n}-\dfrac{f(x_{n})}{f'(x_{n})}, \\[1.8ex]
z_{n} & := y_n-\dfrac{f(x_n)f(y_n)}{(f(x_n)-f(y_n))^2}\cdot\dfrac{f(x_n)}{f'(x_n)}, \\[1.8ex]
x_{n+1} & := z_n-\dfrac{f(z_n)}{f'(x_n)} J(t_n,u_n) G(s_n),
\end{aligned}
\right.
\end{equation}
as iteration rule.

\section{Convergence analysis}
\label{sec:convergence}

In the following theorem, we analyze the convergence order of
method~\eqref{a3}. In particular, we find the requisites to the
weight functions $J$ and $G$ in~\eqref{a3} guarantee the requested order
eight. Although we enunciate the method for real
functions and a real root, the same can be written (with an
identical proof) if we have a complex function $f:D\subset
\mathbb{C} \to \mathbb{C}$ with a complex root~$x^{*} \in D$.

\begin{thm}
\label{thm:main} 
Let $f:D\subset \mathbb{R} \to \mathbb{R}$ be an
eight times continuously differentiable function with a simple
zero $x^{*}\in D$, and let $J:\mathbb{R}^2 \to \mathbb{R}$ and
$G:\mathbb{R \to \mathbb{R}}$ sufficiently differentiable
functions in a neighborhood of the origin. If the initial point
$x_{0}$ is sufficiently close to $x^{*}$. Then, the method defined
by \eqref{a3} converges to $x^{*}$ with order eight if the conditions
\begin{equation*}
J_{0,0}=1, \quad J_{1,0}=2, \quad J_{2,0}=8, \quad J_{0,1}=2,
\quad J_{3,0}=36,
\end{equation*}
and
\begin{equation*}
G_0=1, \quad G_1=1,
\end{equation*}
with $J_{i,j} = \frac{\partial^{i+j} J(t,u)}{\partial
t^i\partial u^j}|_{(t,u)=(0,0)}$
and $G_i = \frac{d^iG(s)}{ds^i}|_{s=0}$ are fulfilled.
\end{thm}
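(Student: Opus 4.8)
The standard approach is a Taylor-expansion bookkeeping argument. Let $e_n := x_n - x^*$ and introduce the normalized coefficients $c_k := \frac{f^{(k)}(x^*)}{k!\,f'(x^*)}$ for $k \ge 2$, so that $f(x_n) = f'(x^*)\bigl(e_n + c_2 e_n^2 + c_3 e_n^3 + \cdots\bigr)$ and a similar expansion holds for $f'(x_n)$. First I would expand $y_n - x^* = x_n - x^* - f(x_n)/f'(x_n)$ and obtain the classical Newton error relation $y_n - x^* = c_2 e_n^2 + O(e_n^3)$, carrying enough terms (up to $e_n^8$) to be useful downstream. Then I would expand $f(y_n)$ in powers of $(y_n - x^*)$, form the ratio $t_n = f(y_n)/f(x_n)$, and substitute into the second substep to get $z_n - x^* = $ (some known degree-4 leading term in $e_n$) $+ O(e_n^5)$; since \eqref{a1} is the optimal fourth-order Kung–Traub method, the leading term is the well-documented one (proportional to $e_n^4$). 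I would similarly record expansions for $f(z_n)$, and then for the three arguments $t_n, u_n = f(z_n)/f(x_n), s_n = f(z_n)/f(y_n)$ of the weight functions, noting that $t_n = O(e_n)$, $u_n = O(e_n^3)$, $s_n = O(e_n^2)$, so all three tend to $0$.

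Next I would Taylor-expand the weight functions about the origin, writing
\begin{equation*}
J(t_n,u_n) = J_{0,0} + J_{1,0}\,t_n + J_{0,1}\,u_n + \tfrac12 J_{2,0}\,t_n^2 + \tfrac16 J_{3,0}\,t_n^3 + \cdots, \qquad G(s_n) = G_0 + G_1 s_n + \cdots,
\end{equation*}
keeping every term that can contribute to order $e_n^8$ in the final error. Because $t_n$ is only $O(e_n)$, powers $t_n^j$ up through $j=3$ matter (hence the appearance of $J_{2,0}$ and $J_{3,0}$), while $u_n$ and $s_n$, being of order $e_n^3$ and $e_n^2$, enter only at low order (explaining why only $J_{0,1}$, $G_0$, $G_1$ are constrained and no higher $u$- or $s$-derivatives appear). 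I would then substitute all of this into the third substep $x_{n+1} - x^* = (z_n - x^*) - \frac{f(z_n)}{f'(x_n)} J(t_n,u_n) G(s_n)$ and collect the result as a series in $e_n$. The proof concludes by forcing the coefficients of $e_n^4, e_n^5, e_n^6, e_n^7$ to vanish: imposing $J_{0,0}=1$ and $G_0=1$ kills the $e_n^4$ term, and successively the vanishing of the next three coefficients forces $J_{1,0}=2$, then $J_{2,0}=8$ together with $J_{0,1}=2$ and $G_1=1$, and finally $J_{3,0}=36$, leaving $x_{n+1} - x^* = K\,e_n^8 + O(e_n^9)$ for an explicit asymptotic error constant $K$ depending on $c_2,\dots,c_5$ (and possibly the free higher derivatives of $J$, $G$).

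The main obstacle is purely the scale of the symbolic computation: one must expand through order $e_n^8$ in several chained quantities ($y_n$, $f(y_n)$, $z_n$, $f(z_n)$, the three ratios, the composed weight functions), and the intermediate expressions are large, so this is essentially a computer-algebra verification. The conceptual subtlety — and the only place where real care is needed — is getting the correct \emph{order of vanishing} of $t_n$, $u_n$, $s_n$ right, since that dictates exactly which partial derivatives of $J$ and $G$ enter at each order and hence which conditions are forced; an off-by-one error there would produce the wrong list of constraints. I would organize the bookkeeping so that at each order $e_n^k$ ($k=4,5,6,7$) exactly one new condition is extracted, which both matches the five-plus-two conditions in the statement and makes the derivation self-checking.
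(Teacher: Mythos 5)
Your proposal follows essentially the same route as the paper: Taylor expansion with $c_k = f^{(k)}(x^*)/(k!\,f'(x^*))$, chained expansions of $y_n$, $z_n$, $f(y_n)$, $f(z_n)$, the ratios $t_n=O(e_n)$, $u_n=O(e_n^3)$, $s_n=O(e_n^2)$, substitution into the third substep, and annihilation of the coefficients of $e_n^4,\dots,e_n^7$, yielding exactly the stated conditions and an error constant depending on $c_2,\dots,c_4$. One minor slip in your narrative grouping: since $u_n=O(e_n^3)$ multiplies the $O(e_n^4)$ factor $f(z_n)/f'(x_n)$, the condition $J_{0,1}=2$ first enters at order $e_n^7$ alongside $J_{3,0}=36$ (not at order $e_n^6$ with $J_{2,0}$ and $G_1$, as you wrote), though this does not change the final list of constraints.
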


\begin{proof}
Let $e_{n} := x_{n}-x^{*}$, $e_{n,y}:=y_{n}-x^{*}$,
$e_{n,z} := z_{n}-x^{*}$ and
$c_{n} := \frac{f^{(n)}(x^{*})}{n!f^{'}(x^{*})}$ for $n\in \mathbb{N}$.
Using the fact that $f(x^{*})=0$, the Taylor expansion of $f$ at
$x^{*}$ yields
\begin{equation}
\label{a10}
f(x_{n}) = f'(x^*)\left(e_{n}
+ c_{2}e_{n}^{2}+c_{3}e_{n}^{3} + \cdots
+ c_{8}e_{n}^{8}\right)+O(e_n^{9})
\end{equation}
and
\begin{equation}
\label{a11}
f'(x_{n}) = f'(x^*) \left(1 +
2c_{2}e_{n}+3c_{3}e_{n}^{2}+4c_{4}e_{n}^{3} + \cdots
+ 9c_9e_n^8\right) + O(e_n^{9}).
\end{equation}
Therefore, we have
\begin{equation*}
\begin{split}
\frac{f(x_{n})}{f'(x_{n})}
&= e_{n}-c_{2}e_{n}^{2} + \left(2c_{2}^{2}-2c_{3}\right) e_{n}^{3}
+ (-4c_2^3+7c_2c_3-3c_4)e_n^4\\
& \quad + (8c_2^4-20c_2^2c_3+6c_3^2+10c_2c_4-4c_5)e_n^5\\
& \quad + (-16c_2^5+52c_2^3c_3-28c_2^2c_4
+ 17c_3c_4-c_2(33c_3^2-13c_5))e_n^6 + O(e_n^{7}),
\end{split}
\end{equation*}
and
\begin{equation*}
\begin{split}
e_{n,y} = y_n-x^* & = c_{2}e_{n}^{2}+(-2c_2^2+2c_3)e_n^3
+ (4c_2^3-7c_2c_3+3c_4)e_n^4\\
& \quad + (-8c_2^4+20c_2^2c_3-6c_3^2-10c_2c_4+4c_5)e_n^5\\
& \quad + (16c_2^5-52c_2^3c_3+28c_2^2c_4-17c_3c_4
+ c_2(33c_3^2-13c_5))e_n^6 + O(e_n^{7}).
\end{split}
\end{equation*}
We have for $f(y_n)$ also
\begin{equation}
\label{a12a}
f(y_{n}) = f'(x^*) \left(e_{n,y} + c_{2}e_{n,y}^{2}+c_{3}e_{n,y}^{3}
+ \cdots + c_{8}e_{n,y}^{8}\right) + O(e_{n,y}^{9}).
\end{equation}
Therefore, by substituting \eqref{a10}, \eqref{a11}, and
\eqref{a12a} into \eqref{a2}, we get
\begin{equation*}
\begin{split}
e_{n,z} = z_n-x^* & = (2c_2^3-c_2c_3)e_{n}^{4}-2
(5c_2^4-7c_2^2c_3+c_3^2+c_2c_4)e_n^5\\
& \quad + \left(31c_2^5-726c_2^3c_3+21c_2^2c_4-7c_3c_4
+ c_2(30c_3^2-3c_5)\right) e_n^6+O(e_n^7).
\end{split}
\end{equation*}
We get for $f(z_n)$ also
\begin{equation}
\label{a12b}
f(z_{n}) = f'(x^*)\left(e_{n,z} +
c_{2}e_{n,z}^{2}+c_{3}e_{n,z}^{3}+\cdots+c_{8}e_{n,z}^{8}\right) + O(e_{n,z}^{9}).
\end{equation}
From \eqref{a10} and \eqref{a12a}, we have
\begin{equation}
\label{a13a}
\begin{split}
t_n = \frac{f(y_n)}{f(x_n)}
& = c_2e_n+(-3c_2^2+2c_3)e_n^2+(8c_2^3-10c_2c_3+3c_4)e_n^3\\
& \quad + (-20c_2^4+37c_2^2c_3-8c_3^2-14c_2c_4+4c_5)e_n^4\\
& \quad + \left(48c_2^5-118c_2^3c_3+51c_2^2c_4-22c_3c_4
+ c_2(55c_3^2-18c_5)\right) e_n^5+O(e_n^6),
\end{split}
\end{equation}
and from \eqref{a10} and \eqref{a12b}, we obtain
\begin{equation}
\label{a13b}
\begin{split}
u_n &= \frac{f(z_n)}{f(x_n)}
= (2c_2^3-c_2c_3)e_n^3+(-12c_2^4+15c_2^2c_3-2c_3^2-2c_2c_4)e_n^4\\
& \quad + (43c_2^5-89c_2^3c_3+23c_2^2c_4-7c_3c_4
+ c_2(33c_3^2-3c_5))e_n^5+O(e_n^6).
\end{split}
\end{equation}
We get from \eqref{a12a} and \eqref{a12b}
\begin{equation}
\label{a13c}
\begin{split}
s_n = \frac{f(z_n)}{f(y_n)} & = (2c_2^2-c_3)e_n^2-2(3c_2^3-4c_2c_3+c_4)e_n^3\\
& \quad + (9c_2^4-25c_2^2c_3+7c_3^2+11c_2c_4-3c_5)e_n^4\\
& \quad + 2\left(c_2^5-18c_2^3c_3+15c_2^2c_4-9c_3c_4
+ c_2(16c_3^2-7c_5)\right)e_n^5+O(e_n^6).
\end{split}
\end{equation}
Expanding $J$ at $(0,0)$ and $G$ at $0$ yields
\begin{align}
\label{a14a}
J(t_n,u_n) & = J_{0,0}+u_nJ_{0,1}+t_nJ_{1,0} + \frac{1}{2}t_n^2J_{2,0}
+ \frac{1}{6}t_n^3J_{3,0}+O(t_n^4,u_n^2),\\
\label{a14b}
G(s_n) & = G_0+s_nG_1+O(s_n^{2}).
\end{align}
Substituting \eqref{a10}--\eqref{a14b} into \eqref{a3}, we obtain
\begin{equation*}
e_{n+1} = x_{n+1}-x^* = R_4e_n^4+R_5e_n^5+R_6e_n^6+R_7e_n^7+R_8e_n^8+O(e_n^9),
\end{equation*}
where
\begin{equation*}
\begin{split}
R_4 &= -(2c_2^3-c_2c_3)(-1+G_0J_{0,0}),\\
R_5 &= -c_2^2(2c_2^2-c_3)(-2+J_{1,0}),\\
R_6 &= -\frac{1}{2}c_2(2c_2^2-c_3) \left(-2c_3(-1+G_1)+c_2^2(-12+4G_1+J_{2,0})\right),\\
R_7 &= -\frac{1}{6}c_2^2(2c_2^2-c_3) \left(-6c_3(-2+J_{0,1})+c_2^2(-60+12J_{0,1}+J_{3,0})\right).
\end{split}
\end{equation*}
By setting $R_4=\cdots=R_7=0$ and $R_8\neq0$, the convergence
order becomes eight. Obviously, we have
\begin{equation*}
\begin{split}
J_{0,0}=1, \quad G_0=1 \quad \Rightarrow \quad R_4&=0,\\
J_{1,0}=2, \quad  \Rightarrow \quad R_5&=0,\\
J_{2,0}=8, \quad G_1=1 \quad \Rightarrow \quad R_6&=0,\\
J_{0,1}=2, \quad J_{3,0}=36 \quad \Rightarrow \quad R_7&=0.
\end{split}
\end{equation*}
Consequently, the error equation becomes in this case
\begin{equation*}
e_{n+1} = \left(c_2\left(2c_2^2-c_3\right)\left(23c_2^4-12c_2^2c_3+c_3^2+c_2c_4\right)\right) e_n^8+O(e_n^9)
\end{equation*}
which finishes the proof of the theorem.
\end{proof}

In what follows, we give some concrete explicit representations of
\eqref{a3} by choosing different weight functions satisfying the
required conditions for the weight functions $J(t_n, u_n)$ and
$G(s_n)$ of Theorem~\ref{thm:main}.

We can choose the weight functions $J(t_n, u_n)$ and $G(s_n)$ as
\begin{align}
\label{a36}
J(t_n, u_n) & = \frac{1+a t_n+(2+b) u_n+(2a+1)t_n^2+4at_n^3}{1+(a-2)t_n+b u_n+ t_n^2}
\intertext{and}
\label{a37}
G(s_n) & = \frac{1+c s_n}{1+(c-1)s_n}
\end{align}
with arbitrary $a, b, c \in \mathbb{C}$.
It is a simple task to check that the functions $J(t_n,u_n)$ and $G(s_n)$
in \eqref{a36} and \eqref{a37} satisfy the assumptions of
Theorem~\ref{thm:main} for all choices of $a,b,c$.
Hence, three-parametric family of optimal eighth-order iterative
root-finding methods is obtained.

By fixing the particular parameters $a, b, c$, we are going to give
two examples of this family of methods.

\paragraph{Method 1:}
Set $a=b=c=\dfrac{1}{2}$. Then, we get
\begin{equation}
\label{n1}
\left\{
\begin{aligned}
y_{n} & := x_{n} - \dfrac{f(x_{n})}{f'(x_{n})}, \\[1.8ex]
z_{n} & := y_{n} - \dfrac{f(x_n)f(y_n)}{(f(x_n)-f(y_n))^2} \dfrac{f(x_n)}{f'(x_n)}, \\[1.8ex]
x_{n+1} & := z_{n} - \dfrac{f(z_n)}{f'(x_n)}
\left(\dfrac{2+t_n+5u_n+4t_n^2+4t_n^3}{2-3t_n+u_n+2t_n^2} \cdot
\dfrac{2+s_n}{2-s_n}\right)
\end{aligned}
\right.
\end{equation}
with $t_n = \frac{f(y_n)}{f(x_n)}$, $u_n = \frac{f(z_n)}{f(x_n)}$,
$s_n = \frac{f(z_n)}{f(y_n)}$.

\paragraph{Method 2:}
Set $a = \dfrac{i+1}{2}$, $b = 1+i$, and $c =
\dfrac{i-1}{2}$. So, we have
\begin{equation}
\label{n2}
\left\{
\begin{aligned}
y_{n} & := x_{n} - \dfrac{f(x_{n})}{f'(x_{n})}, \\[1.8ex]
z_{n} & := y_{n} - \dfrac{f(x_n)f(y_n)}{(f(x_n)-f(y_n))^2} \dfrac{f(x_n)}{f'(x_n)}, \\[1.8ex]
x_{n+1} & := z_{n} - \dfrac{f(z_n)}{f'(x_n)}
\left( \dfrac{1+(\frac{i+1}{2})t_n+(i+3) u_n+(i+2)t_n^2+4(\frac{i+1}{2})
t_n^3}{1+(\frac{i-3}{2})t_n+(i+1)u_n+ t_n^2} \cdot
\dfrac{1+(\frac{i-1}{2}) s_n}{1+(\frac{i-3}{2})s_n} \right)
\end{aligned}
\right.
\end{equation}
with $t_n = \frac{f(y_n)}{f(x_n)}$, $u_n = \frac{f(z_n)}{f(x_n)}$,
$s_n = \frac{f(z_n)}{f(y_n)}$.

We will apply in the next sections the new methods~\eqref{n1}
and~\eqref{n2} to several benchmark examples and will
compare the new methods with some existing optimal three-point methods
of order eight having the same optimal computational efficiency index equal
to $\sqrt[4]{8} \simeq 1.68179$, see~\cite{Ostrowski, Traub}.

The existing methods that we are going to use to compare are the following:

\paragraph{Method 3:}
The method by Chun and Lee \cite{Chun1} is given by
\begin{equation}
\label{o1}
\left\{
\begin{aligned}
y_{n} & := x_{n} - \dfrac{f(x_{n})}{f'(x_{n})}, \\[1.8ex]
z_{n} & := y_{n} - \dfrac{f(y_n)}{f'(x_n)}
\cdot \dfrac{1}{\left(1-\frac{f(y_n)}{f(x_n)}\right)^2}, \\[1.8ex]
x_{n+1} & := z_{n} - \dfrac{f(z_n)}{f'(x_n)}
\cdot \dfrac{1}{\left(1-H(t_n)-J(s_n)-P(u_n)\right)^2}
\end{aligned}
\right.
\end{equation}
with weight functions
\begin{equation*}
H(t_n) = -\beta-\gamma+t_n+\frac{t_n^2}{2}-\frac{t_n^3}{2}, \quad
J(s_n) = \beta+\frac{s_n}{2}, \quad P(u_n) = \gamma+\frac{u_n}{2},
\end{equation*}
where $t_n = \frac{f(y_n)}{f(x_n)}$, $s_n = \frac{f(z_n)}{f(x_n)}$,
$u_n = \frac{f(z_n)}{f(y_n)}$, and $\beta, \gamma \in \mathbb{R}$.
Note that the parameters $\beta$ and $\gamma$ cancel when used
in~\eqref{o1}. Hence, their choice has no contribution to the method.

\paragraph{Method 4:}
The method by B. Neta~\cite{Neta0}, see also~\cite[formula~(9)]{Neta1}, is
given by
\begin{equation}
\label{o2}
\left\{
\begin{aligned}
y_{n} & := x_{n} - \dfrac{f(x_n)}{f'(x_n)}, \\[1.8ex]
z_{n} & := y_{n} - \dfrac{f(x_n)+Af(y_n)}{f(x_n)+(A-2)f(y_n)}\cdot
\dfrac{f(y_n)}{f'(x_n)},
\quad A\in \mathbb{R},\\[1.8ex]
x_{n+1} & := y_n + \delta_1f^2(x_n)+\delta_2f^3(x_n),
\end{aligned}
\right.
\end{equation}
where
\begin{alignat*}{2}
F_y & = f(y_n)-f(x_n), &\qquad F_z &= f(z_n)-f(x_n),\\
\zeta_y & = \dfrac{1}{F_y}
\left(\dfrac{y_n-x_n}{F_y}-\dfrac{1}{f'(x_n)}\right),
&\qquad
\zeta_z & = \dfrac{1}{F_z}
\left(\dfrac{z_n-x_n}{F_z}-\dfrac{1}{f'(x_n)}\right),\\
\delta_2 & = -\dfrac{\zeta_y-\zeta_z}{F_y-F_z},
&\qquad
\delta_1 & = \zeta_y+\delta_2F_y.
\end{alignat*}
We will use $A = 0$ in the numerical experiments of this paper.

\paragraph{Method 5:}
The Sharma and Sharma method \cite{Sharma1} is given by
\begin{equation}
\label{o3}
\left\{
\begin{aligned}
y_{n} & := x_{n} - \dfrac{f(x_n)}{f'(x_n)}, \\[1.8ex]
z_{n} & := y_{n} - \dfrac{f(y_n)}{f'(x_n)} \cdot \dfrac{f(x_n)}{f(x_n)-2f(y_n)}, \\[1.8ex]
x_{n+1} & := z_{n} - \dfrac{f[x_n,y_n]f(z_n)}{f[x_n,z_n]f[y_n,z_n]}\,W(t_n),
\end{aligned}
\right.
\end{equation}
with the weight function
\begin{equation*}
W(t_n) = 1+\frac{t_n}{1+\alpha t_n},
\quad \alpha\in \mathbb{R},
\end{equation*}
and $t_n = \frac{f(z_n)}{f(x_n)}$.
We will use $\alpha=1$ in the numerical experiments of this paper.

\paragraph{Method 6:}
The method from Babajee, Cordero, Soleymani and Torregrosa \cite{Babajee}
is given by
\begin{equation}
\label{o4}
\left\{
\begin{aligned}
y_{n} & := x_{n} - \dfrac{f(x_n)}{f'(x_n)}
\left( 1 + \left(\dfrac{f(x_n)}{f'(x_n)}\right)^5 \right), \\[1.8ex]
z_{n} & := y_{n} - \dfrac{f(y_n)}{f'(x_n)}
\left( 1 - \dfrac{f(y_n)}{f(x_n)}\right)^{-2}, \\[1.8ex]
x_{n+1} & := z_n - \dfrac{f(z_n)}{f'(x_n)}
\cdot \dfrac{1+\left(\dfrac{f(y_n)}{f(x_n)}\right)^2 + 5\left(\dfrac{f(y_n)}{f(x_n)}\right)^4
+ \dfrac{f(z_n)}{f(y_n)}}
{\left( 1 - \dfrac{f(y_n)}{f(x_n)} - \dfrac{f(z_n)}{f(x_n)} \right)^2} .
\end{aligned}
\right.
\end{equation}

\section{Numerical examples}
\label{sec:examples}

The particular cases~\eqref{n1} and~\eqref{n2} of the the three-point
method~\eqref{a3} are tested on a number of nonlinear
equations. To obtain a high accuracy and avoid the loss of
significant digits, we employed multi-precision arithmetic with
20\,000 significant decimal digits in the programming package
Mathematica.

In order to test our proposed methods \eqref{n1} and \eqref{n2},
and also to compare them with the methods \eqref{o1}, \eqref{o2},
\eqref{o3}, and \eqref{o4}, we compute the error, the computational
order of convergence (COC) by the approximate formula~\cite{coc}
\begin{equation}
\label{coc}
\textup{COC} \approx \frac{\ln|(x_{n+1}-x^{*})/(x_{n}-x^{*})|}{\ln|(x_{n}-x^{*})/(x_{n-1}-x^{*})|},
\end{equation}
and the approximated computational order of convergence (ACOC) by
the formula~\cite{acoc}
\begin{equation}
\label{acoc}
\textup{ACOC} \approx \frac{\ln|(x_{n+1}-x_{n})/(x_{n}-x_{n-1})|}{\ln|(x_{n}-x_{n-1})/(x_{n-1}-x_{n-2})|}.
\end{equation}

It is worth noting that COC has been used in the recent years.
Nevertheless, ACOC is more practical because it does not require to know the
root~$x^{*}$. For a comparison among several convergence orders,
see~\cite{acoc-et-al}. Moreover, we should note that the results
for these formulas not always coincide with or approximate the
exact convergence order of the method when they are applied to a
particular example. The reason is that we have in the error equations of
the methods some coefficients that depend on $c_k$ (see
the proof of Theorem~\ref{thm:main}). Hence, these $c_k$'s may vanish
or vary for different kinds of examples. But, in general, a
``random'' example should provide good approximations for the
order of convergence of the method.

On the other hand, it is nice to note that, given an iterative method,
computing COC or ACOC on several examples is a good experiment to check
theoretical errors in the deduction of the method and to check practical
errors in the implementation of the method in a computer.
For a general problem it will be difficult that COC or ACOC approach
the theoretical order of convergence by chance.

We have used both COC and ACOC for checking the accuracy of the considered
methods. Note that both COC and ACOC give already for small values of $n$
good experimental approximations to convergence order.

\begin{table}[htb!]
\begin{center}
\begin{tabular}{lcc}
   \toprule
    test function $f_j$ & root $x^*$ & initial guess $x_0$
  \\ \midrule
    $f_1(x) = \ln (1+x^2)+e^{x^2-3x}\sin x$ & $0$ & $0.35$
  \\[0.5ex]
   $f_2(x) = 1+e^{2+x-x^2}+x^3-\cos(1+x)$ & $-1$ & $-0.3$
  \\[0.5ex]
   $f_3(x) = (1+x^2)\cos\frac{\pi x}{2}+\frac{\ln(x^2+2x+2)}{1+x^2}$ & $-1$ & $-1.1$
  \\[0.5ex]
   $f_4(x) = x^4+\sin\frac{\pi}{x^2}-5$ & $\sqrt{2}$ & $1.5$
  \\[0.5ex] 
  \bottomrule
\end{tabular}
\caption{Test functions $f_1, \dots, f_4$, root
$x^*$, and initial guess $x_0$.}
\label{table1}
\end{center}
\end{table}

In what follows, we are going to perform this kind of numerical experiments with the four test functions
$f_j(x)$, $j=1,\dots,4$, that appear in Table~\ref{table1}. In every case, and using the six
eighth-order iterative methods described in the paper, we are going to reach the root $x^*$ starting in the point~$x_0$.


In Table~\ref{table2-3}, our new three-point methods \eqref{n1} and
\eqref{n2} are tested on the four nonlinear equations $f_j(x)=0$,
$j=1,2,3,4$, and compared them with the methods~\eqref{o1},
\eqref{o2}, \eqref{o3}, and~\eqref{o4} from other authors.
We abbreviate \eqref{n1}--\eqref{o4} as M1--M6. Notice that, to
estimate the COC and the ACOC, it has been enough to use $n=3$
in \eqref{coc} and \eqref{acoc} to get excellent
approximations of the order of convergence.

\begin{table}[htb!]\small
\begin{center}
\begin{tabular}{cllllll}
 \toprule
  & \multicolumn{1}{c}{M1} & \multicolumn{1}{c}{M2} & \multicolumn{1}{c}{M3}
  & \multicolumn{1}{c}{M4} & \multicolumn{1}{c}{M5} & \multicolumn{1}{c}{M6} \\
\midrule
$f_1$, $x_0=0.35$\\
$|x_{1}-x^{*}|$ & $0.140\mathrm{e}{-}3$ &  $0.318\mathrm{e}{-}3$ & $0.721\mathrm{e}{-}4$
& $0.893\mathrm{e}{-}4$ & $0.753\mathrm{e}{-}4$ & $0.347\mathrm{e}{-}3$ \\
$|x_{2}-x^{*}|$ & $0.583\mathrm{e}{-}28$& $0.562\mathrm{e}{-}25$ & $0.230\mathrm{e}{-}30$
& $0.126\mathrm{e}{-}30$ & $0.619\mathrm{e}{-}31$ & $0.471\mathrm{e}{-}25$ \\
$|x_{3}-x^{*}|$ & $0.362\mathrm{e}{-}223$ & $0.531\mathrm{e}{-}199$ &  $0.252\mathrm{e}{-}242$
& $0.200\mathrm{e}{-}245$ & $0.128\mathrm{e}{-}247$ & $0.546\mathrm{e}{-}200$ \\
COC &  $8.0000$ & $8.0000$ &  $8.0000$ & $8.0000$ & $8.0000$ & $8.0000$ \\
ACOC & $7.9999$ & $8.0000$ &  $7.9999$ & $7.9999$ & $7.9999$ & $7.9999$ \\
\midrule
$f_2$, $x_0=-0.3$\\
$|x_{1}-x^{*}|$ & $0.526\mathrm{e}{-}4$ & $0.113\mathrm{e}{-}3$ & $0.157\mathrm{e}{-}3$
& $0.763\mathrm{e}{-}4$ & $0.871\mathrm{e}{-}4$ & $0.411\mathrm{e}{-}3$ \\
$|x_{2}-x^{*}|$ & $0.534\mathrm{e}{-}37$& $0.263\mathrm{e}{-}33$ & $0.119\mathrm{e}{-}33$
& $0.540\mathrm{e}{-}35$ & $0.134\mathrm{e}{-}34$ & $0.377\mathrm{e}{-}29$ \\
$|x_{3}-x^{*}|$ & $0.599\mathrm{e}{-}301$ & $0.226\mathrm{e}{-}270$ & $0.138\mathrm{e}{-}274$
& $0.342\mathrm{e}{-}284$ & $0.438\mathrm{e}{-}281$ & $0.189\mathrm{e}{-}237$ \\
COC &  $8.0000$ & $8.0000$ &  $8.0000$ & $8.0000$ & $8.0000$ & $8.0000$ \\
ACOC & $7.9999$ & $7.9999$ & $7.9998$ & $7.9999$ & $7.9999$ & $7.9999$ \\
\midrule
$f_3$, $x_0=-1.1$\\
$|x_{1}-x^{*}|$ & $0.235\mathrm{e}{-}7$ & $0.298\mathrm{e}{-}7$ & $0.614\mathrm{e}{-}8$
& $0.388\mathrm{e}{-}8$ & $0.175\mathrm{e}{-}8$ & $0.554\mathrm{e}{-}8$ \\
$|x_{2}-x^{*}|$ & $0.393\mathrm{e}{-}60$ & $0.373\mathrm{e}{-}59$ & $0.328\mathrm{e}{-}65$
& $0.254\mathrm{e}{-}67$ & $0.154\mathrm{e}{-}70$ & $0.426\mathrm{e}{-}66$ \\
$|x_{3}-x^{*}|$ & $0.239\mathrm{e}{-}482$ & $0.222\mathrm{e}{-}474$ & $0.217\mathrm{e}{-}523$
& $0.877\mathrm{e}{-}541$ & $0.5821\mathrm{e}{-}567$ & $0.528\mathrm{e}{-}531$ \\
COC &  $8.0000$ & $8.0000$ & $8.0000$ & $8.0000$ & $8.0000$ & $8.0000$ \\
ACOC & $7.9999$ & $7.9999$ & $8.0000$ & $7.9999$ & $8.0000$ & $8.0000$ \\
\midrule
$f_4$, $x_0=1.5$\\
$|x_{1}-x^{*}|$ & $0.286\mathrm{e}{-}8$ & $0.602\mathrm{e}{-}8$ & $0.433\mathrm{e}{-}8$
& $0.327\mathrm{e}{-}10$ & $0.642\mathrm{e}{-}10$ & $0.281\mathrm{e}{-}8$ \\
$|x_{2}-x^{*}|$ & $0.108\mathrm{e}{-}68$& $0.181\mathrm{e}{-}65$ & $0.134\mathrm{e}{-}66$
& $0.369\mathrm{e}{-}84$ & $0.101\mathrm{e}{-}81$ & $0.341\mathrm{e}{-}68$ \\
$|x_{3}-x^{*}|$ & $0.460\mathrm{e}{-}552$ & $0.121\mathrm{e}{-}525$ & $0.116\mathrm{e}{-}534$
& $0.967\mathrm{e}{-}676$ & $0.389\mathrm{e}{-}656$ & $0.161\mathrm{e}{-}547$ \\
COC &  $8.0000$ & $8.0000$ & $8.0000$ & $8.0000$ & $8.0000$ & $8.0000$ \\
ACOC & $8.0000$ & $8.0000$ & $7.9999$ & $7.9999$ & $7.9999$ & $8.0000$ \\
\bottomrule
\end{tabular}
\caption{Errors, COC, and ACOC for the iterative methods
\eqref{n1}--\eqref{o4} (abbreviated as M1--M6) applied to the find
the root of test functions $f_1,\dots,f_4$ given in
Table~\ref{table1}.}
\label{table2-3}
\end{center}
\end{table}


\section{Dynamic behavior}
\label{sec:dynamic}

We already observed that all methods converge if the initial guess
is chosen suitably. We now investigate the regions where we must
choose the initial point to achieve the root. In other words, we
numerically approximate the domain of attraction of the zeros as a
qualitative measure of how demanding is the method on the initial
approximation of the root. To answer the important question on the
dynamical behavior of the algorithms, we investigate the dynamics
of the new methods \eqref{n1} and~\eqref{n2} and compare
with common and well-performing methods from the literature,
namely~\eqref{o1}, \eqref{o2}, \eqref{o3}, and~\eqref{o4}. We recall in the
following line some basic concepts such as basin of attraction. For more
details and many other examples of the study of the dynamic behavior for
iterative methods, one can consult~\cite{Amat5, Babajee, Chicharro,
Ezquerro2, Ezquerro1, GutiMaVar, Paricio, Stewart, Varona}.

Let $Q:\mathbb{C} \to \mathbb{C}$ be a rational map on the
complex plane. For $z\in \mathbb{C} $, we define its orbit as the
set $\operatorname{orb}(z) = \{z,\,Q(z),\,Q^2(z),\dots\}$. A point $z_0 \in
\mathbb{C}$ is called periodic point with minimal period $m$ if
$Q^m(z_0) = z_0$ where $m$ is the smallest positive integer with this
property (and thus $\{z_0,Q(z_0),\dots, Q^{m-1}(z_0)\}$ is a cycle).
A periodic point with minimal period $1$ is called fixed
point. Moreover, a fixed point $z_0$ is called attracting if
$|Q'(z_0)|<1$, repelling if $|Q'(z_0)|>1$, and neutral
otherwise. The Julia set of a nonlinear map $Q(z)$, denoted by
$J(Q)$, is the closure of the set of its repelling periodic
points. The complement of $J(Q)$ is the Fatou set $F(Q)$.

In our case, the six methods \eqref{n1}--\eqref{o4} provide
iterative rational maps $Q(z)$ when they are applied to find
the roots of complex polynomials $p(z)$. In particular,
we are interesting in the basins of attraction of the roots
of the polynomials where the basin of attraction of a
root $z^*$ is the complex set
$\{ z_0 \in \mathbb{C} : \operatorname{orb}(z_0)  \to  z^* \}$.
It is well known that the basins of attraction of the different roots
lie in the Fatou set $F(Q)$.
The Julia set $J(Q)$ is, in general, a fractal and, in it,
the rational map $Q$ is unstable.

For the dynamical and graphical point of view, we take a
$512 \times 512$ grid of the square $[-3,3]\times[-3,3] \subset \mathbb{C}$
and assign a color to each point $z_0\in D$ according to the simple
root to which the corresponding orbit of the iterative method
starting from $z_0$ converges, and we mark the point as black if
the orbit does not converge to a root in the sense that after at
most $15$ iterations it has a distance to any of the roots which
is larger than $10^{-3}$. We have used only $15$ iterations because
we are using eighth-order methods so, if the method converges,
it is usually very fast. In this way, we distinguish the
attraction basins by their color.

\begin{table}[htb!]
\centering
\begin{tabular}{l @{\qquad} c}
   \toprule
    Test polynomials & Roots
   \\
   \midrule
   $p_1(z) = z^2-1$ & $1,\quad -1$
  \\[0.5ex]
   $p_2(z) = z^3-z $ & $0,\quad 1,\quad -1$
  \\[0.5ex]
   $p_3(z) = z(z^2+1)(z^2+4) $ & $0, \quad 2i, \quad -2i,\quad i, \quad -i$
  \\[0.5ex]
   $p_4(z) = (z^4-1)(z^2+2i)$ & $1, \quad i, \quad -1,\quad -i, \quad -1+i, \quad 1-i$
  \\[0.5ex]
   $p_5(z) = z^7-1$ & $e^{2k\pi i/7}, \qquad k=0,\dots,6$
  \\[0.5ex]
   $p_6(z) = (10z^5-1)(z^5+10)$ & ${(\tfrac{1}{10})}^{1/5} e^{2k\pi i/5},
         \quad (-10)^{1/5} e^{2k\pi i/5}, \qquad k=0,\dots,4$
  \\[0.5ex]
  \bottomrule
\end{tabular}
\caption{Test polynomials $p_1(z),\dots, p_6(z)$ and their roots.}
\label{table4}
\end{table}

Different colors are used for different roots. In the basins of attraction,
the number of iterations needed to achieve the root is shown by the
brightness. Brighter color means less iteration steps. Note that black
color denotes lack of convergence to any of the roots. This happens, in
particular, when the method converges to a fixed point that is not a root or
if it ends in a periodic cycle or at infinity. Actually and although we have
not done it in this paper, infinity can be considered an ordinary point if
we consider the Riemann sphere instead of the complex plane. In this case,
we can assign a new ``ordinary color'' for the basin of attraction of
infinity. Details for this idea can be found in~\cite{Paricio}.

\begin{figure}
\centering
\includegraphics[width=0.3\textwidth]{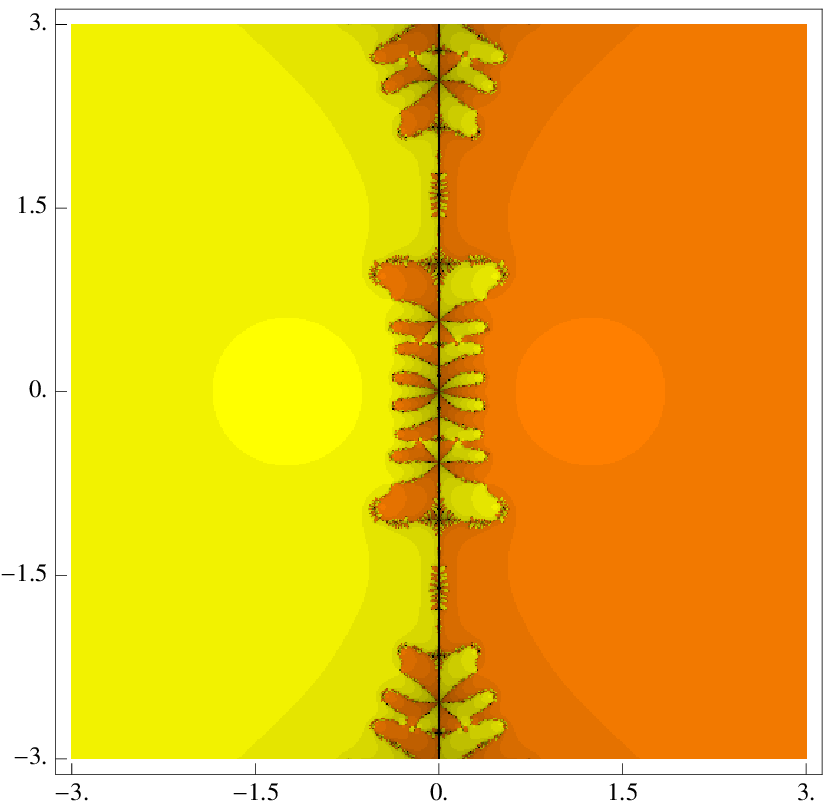}
\quad
\includegraphics[width=0.3\textwidth]{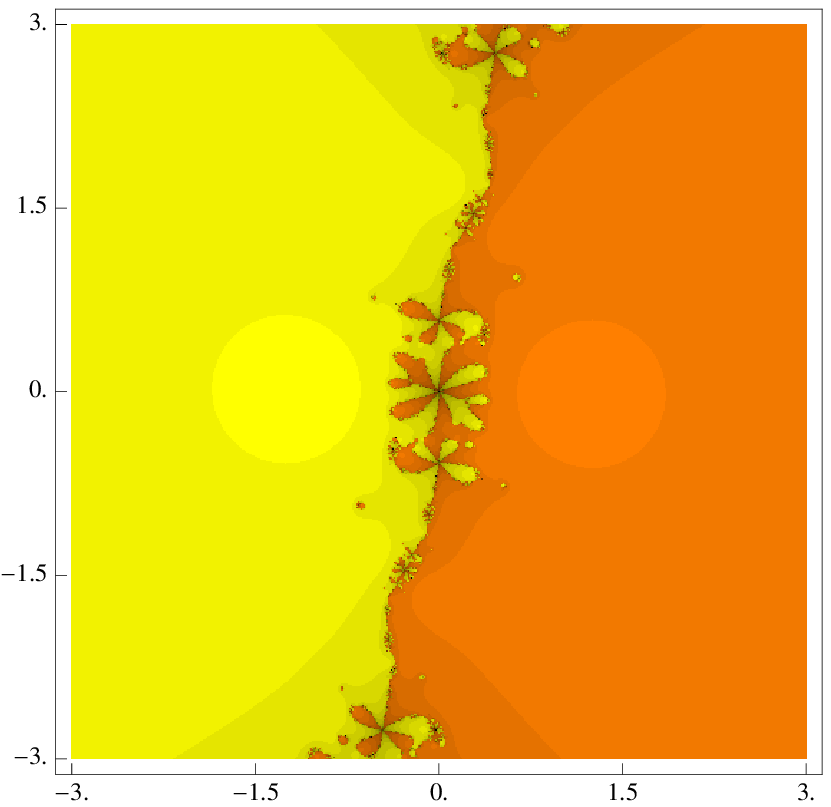}
\quad
\includegraphics[width=0.3\textwidth]{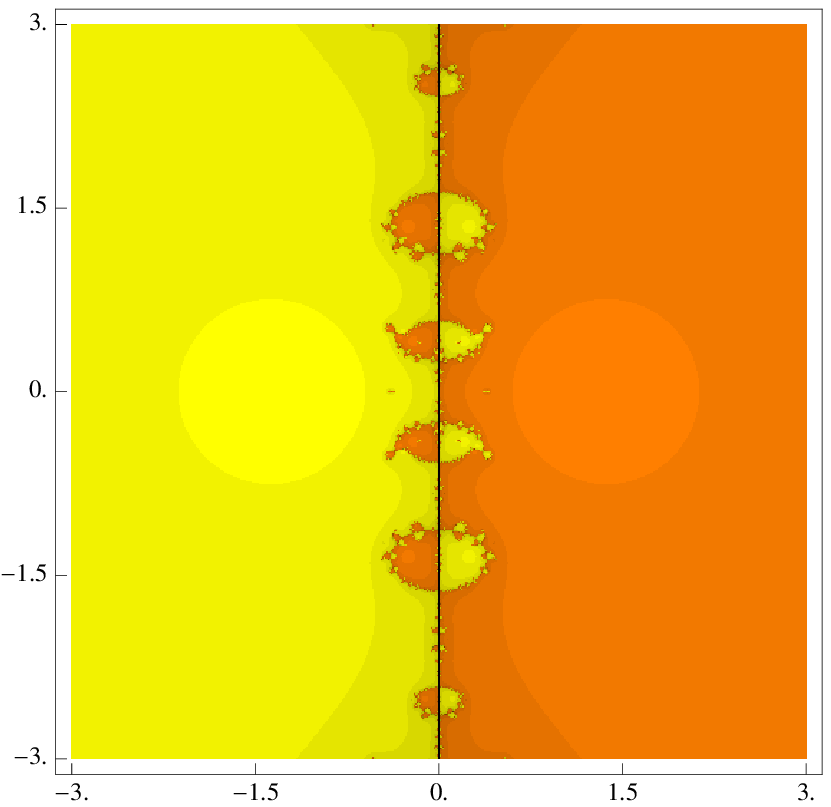}
\\
\includegraphics[width=0.3\textwidth]{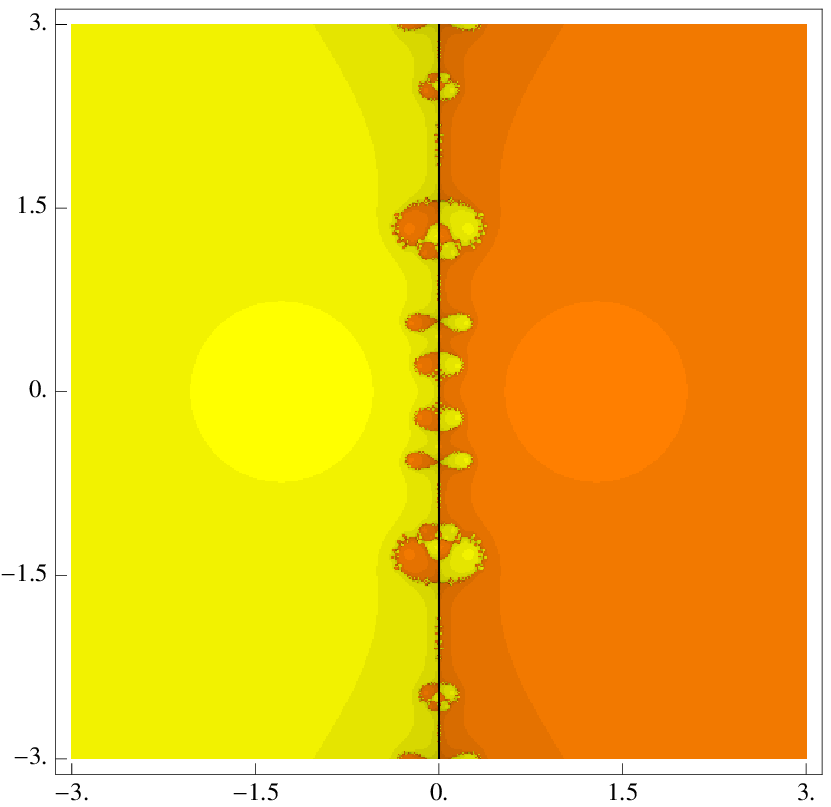}
\quad
\includegraphics[width=0.3\textwidth]{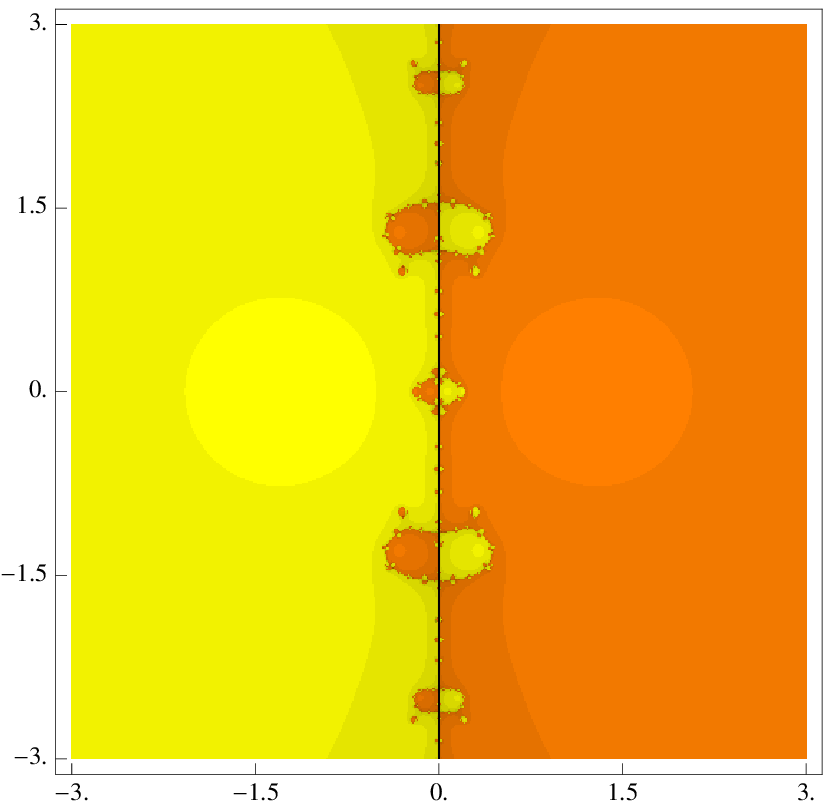}
\quad
\includegraphics[width=0.3\textwidth]{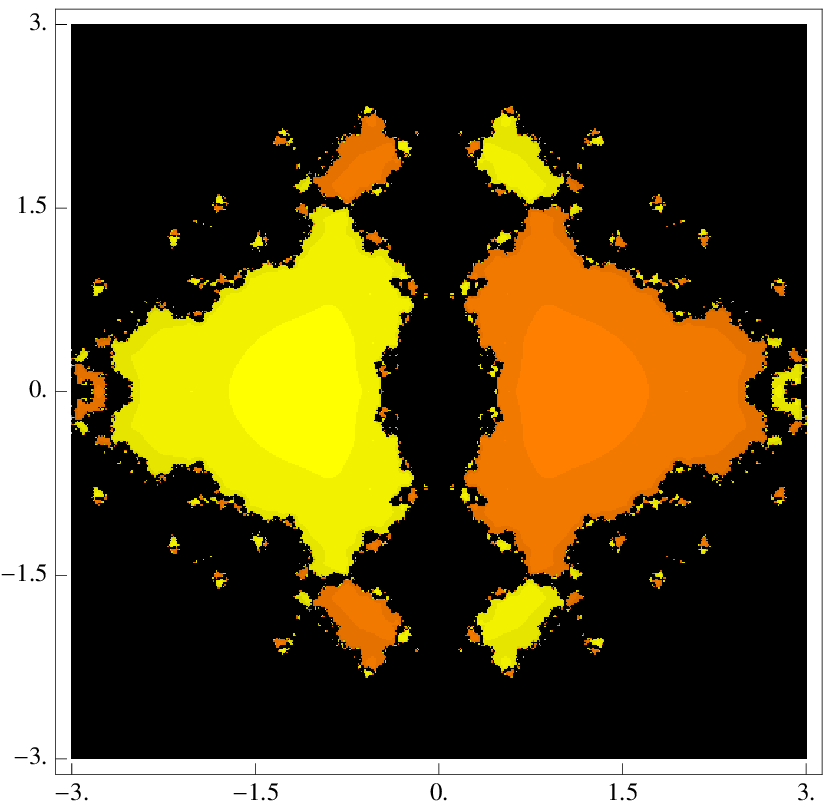}
\caption{Comparison of basins of attraction of methods
\eqref{n1}--\eqref{o4} for the test problem $p_1(z)= z^2-1=0$.}
\label{fig:figure1}
\end{figure}

\begin{figure}
\centering
\includegraphics[width=0.3\textwidth]{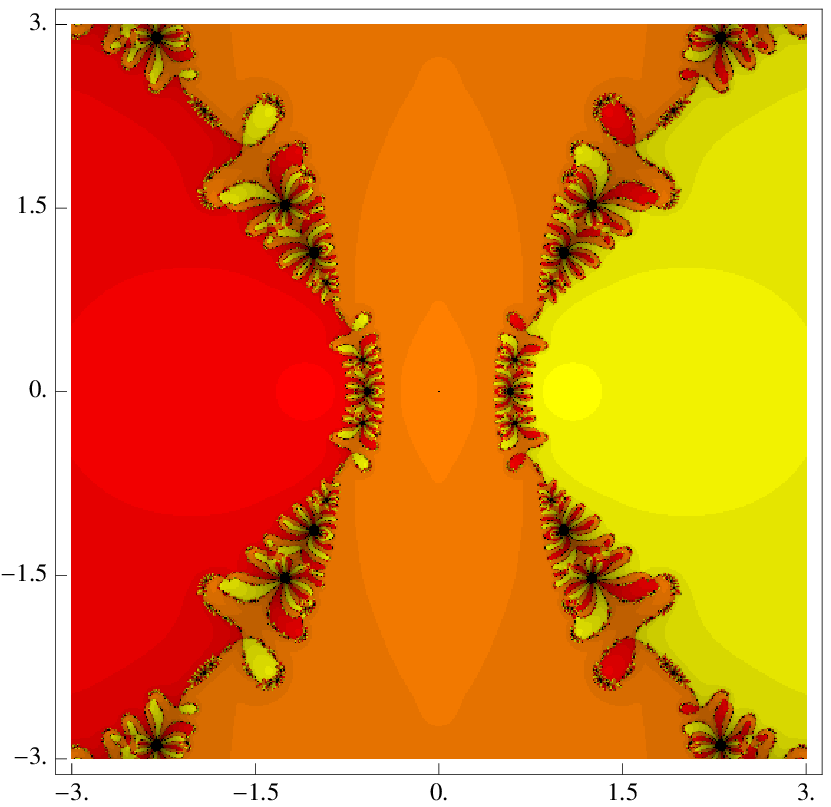}
\quad
\includegraphics[width=0.3\textwidth]{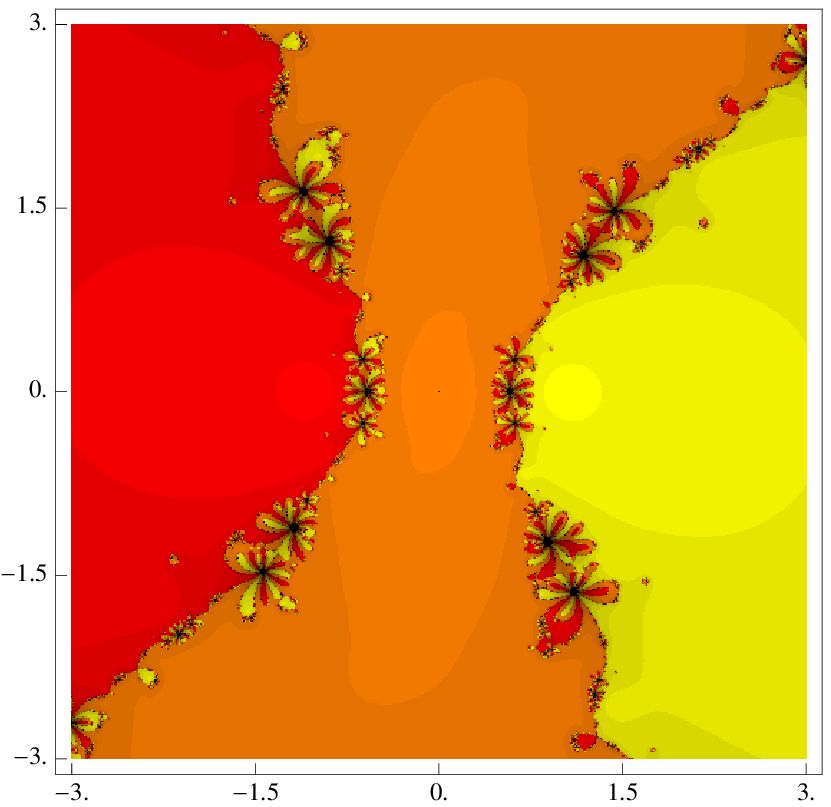}
\quad
\includegraphics[width=0.3\textwidth]{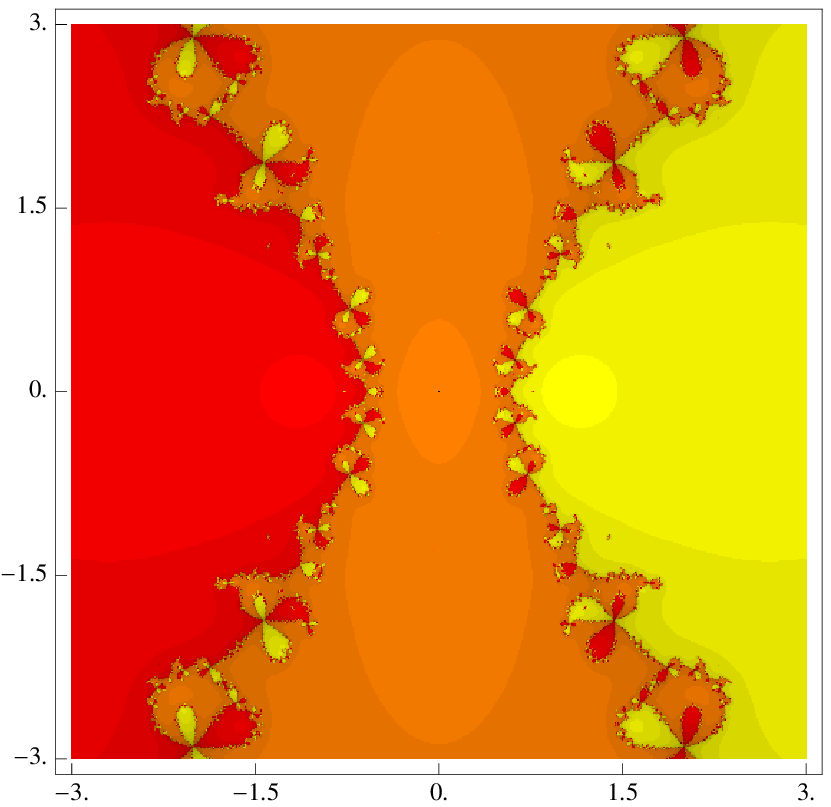}
\\
\includegraphics[width=0.3\textwidth]{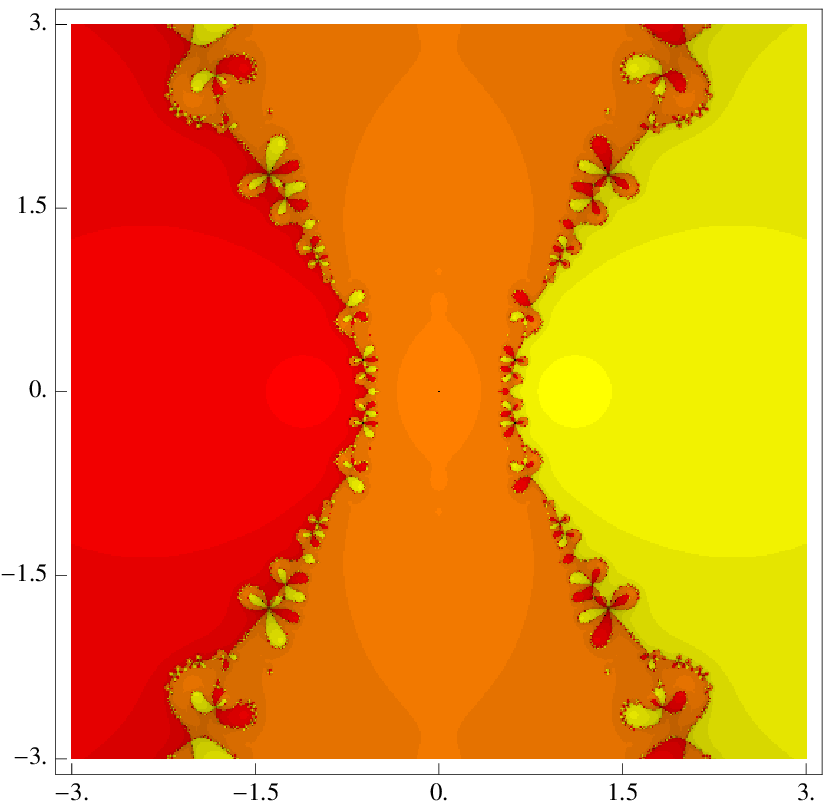}
\quad
\includegraphics[width=0.3\textwidth]{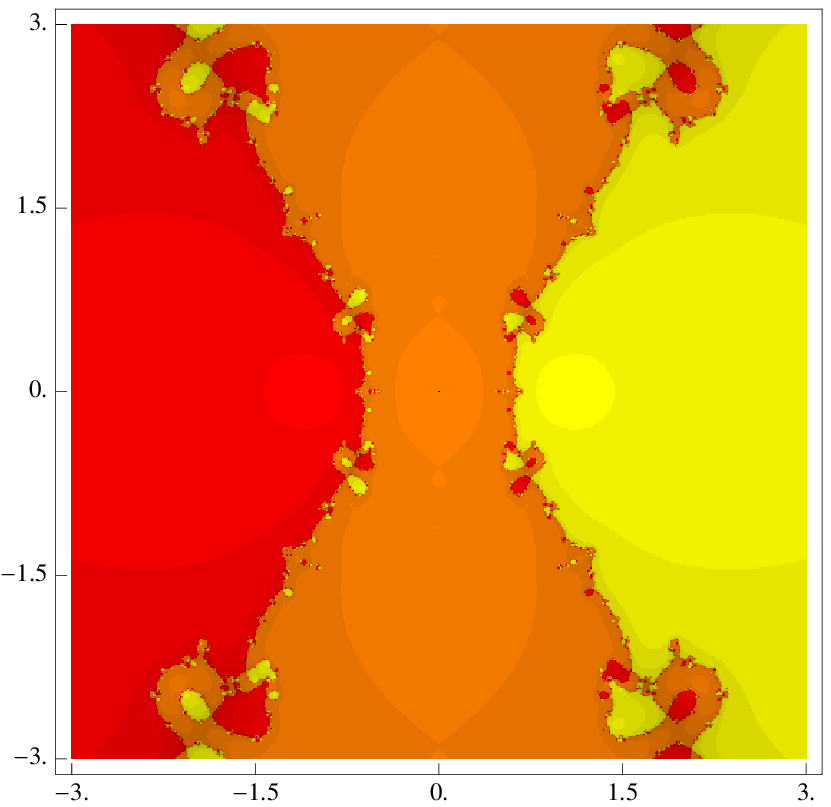}
\quad
\includegraphics[width=0.3\textwidth]{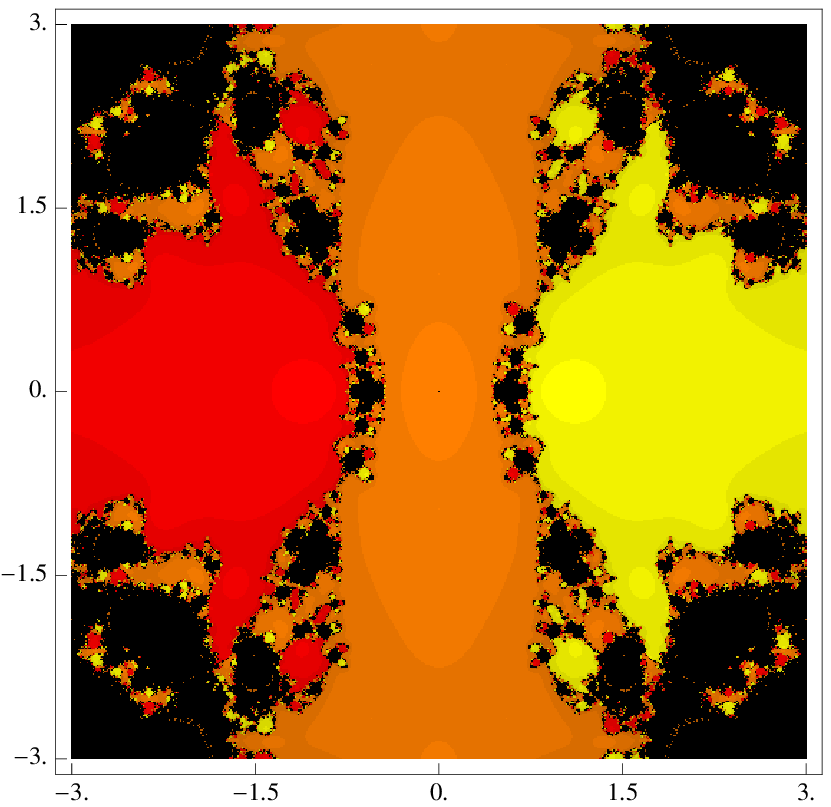}
\caption{Comparison of basins of attraction of methods
\eqref{n1}--\eqref{o4} for the test problem $p_2(z)= z^3-z=0$.}
\label{fig:figure2}
\end{figure}

Basins of attraction for the six methods \eqref{n1}--\eqref{o4}
for the six test problems $p_i(z)=0$, $i=1,\dots,6$, are illustrated
in Figures~\ref{fig:figure1}--\ref{fig:figure6} from left to right and
from top to bottom.

\begin{figure}
\centering
\includegraphics[width=0.3\textwidth]{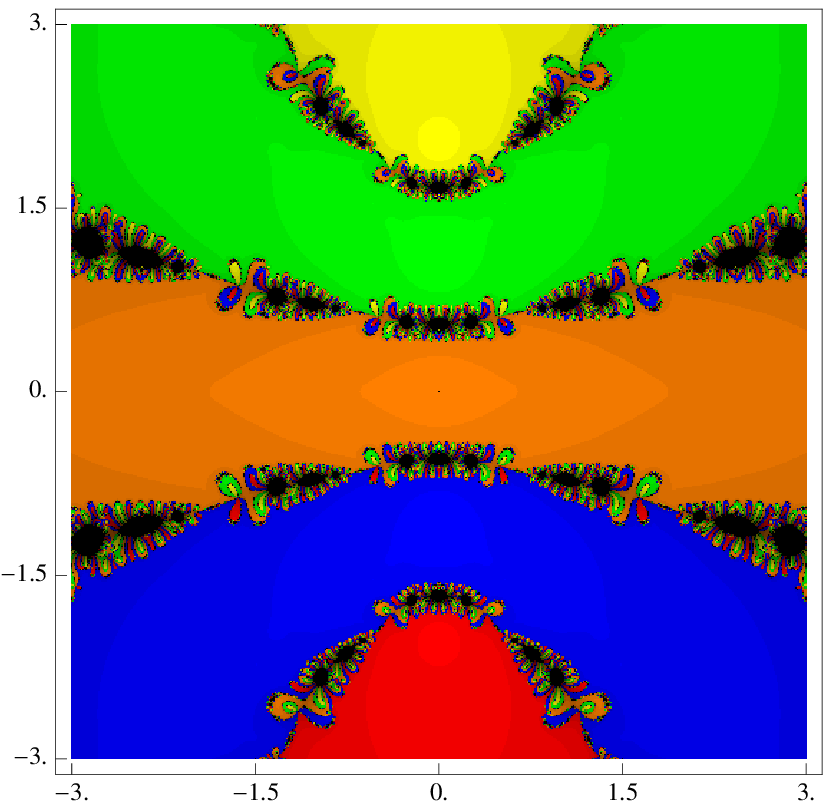}
\quad
\includegraphics[width=0.3\textwidth]{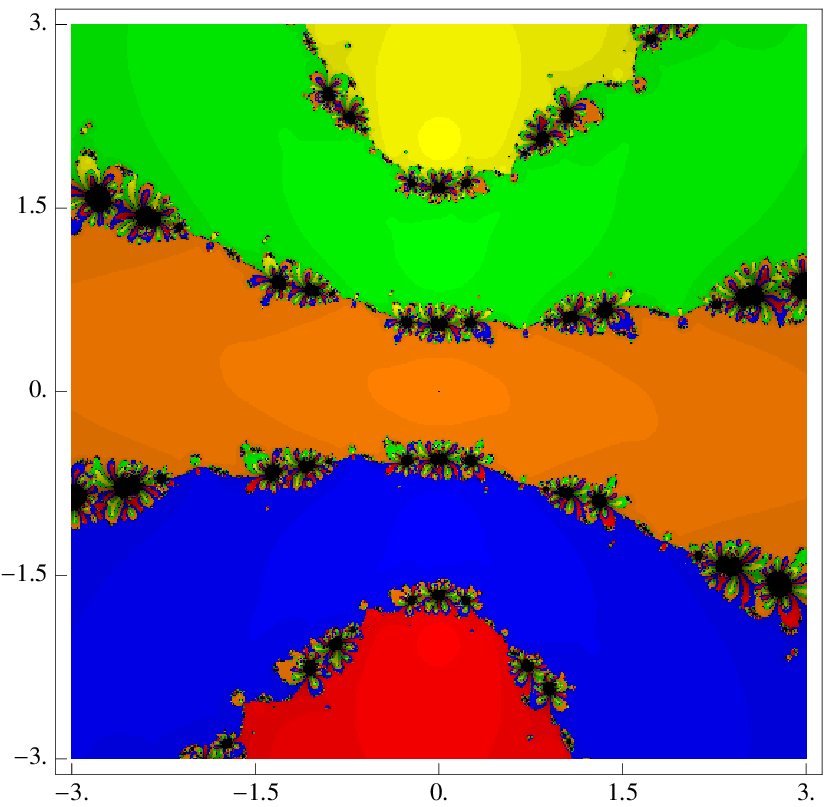}
\quad
\includegraphics[width=0.3\textwidth]{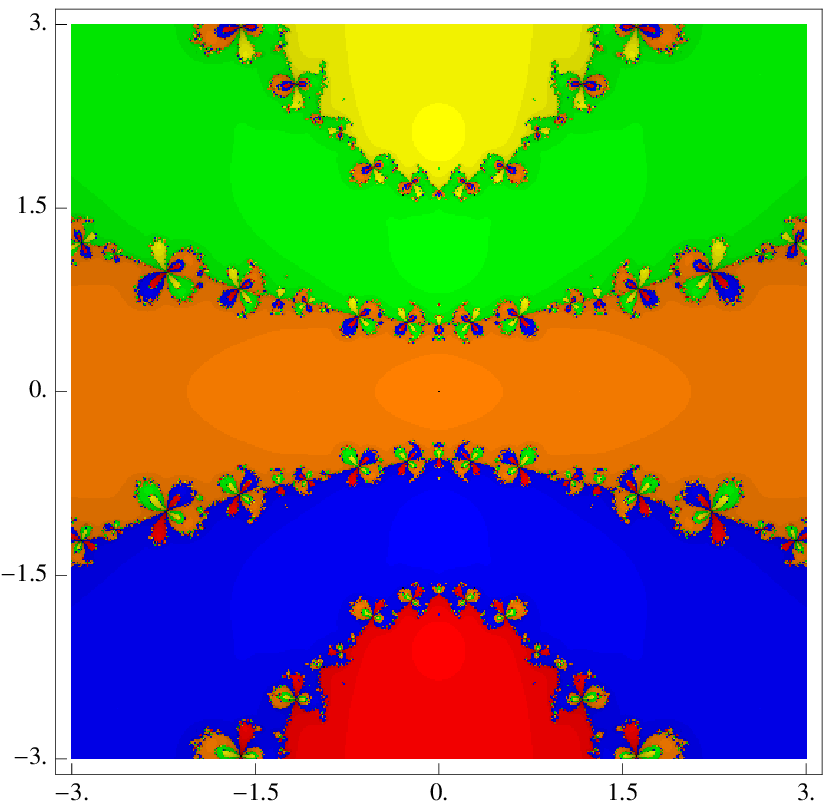}
\\
\includegraphics[width=0.3\textwidth]{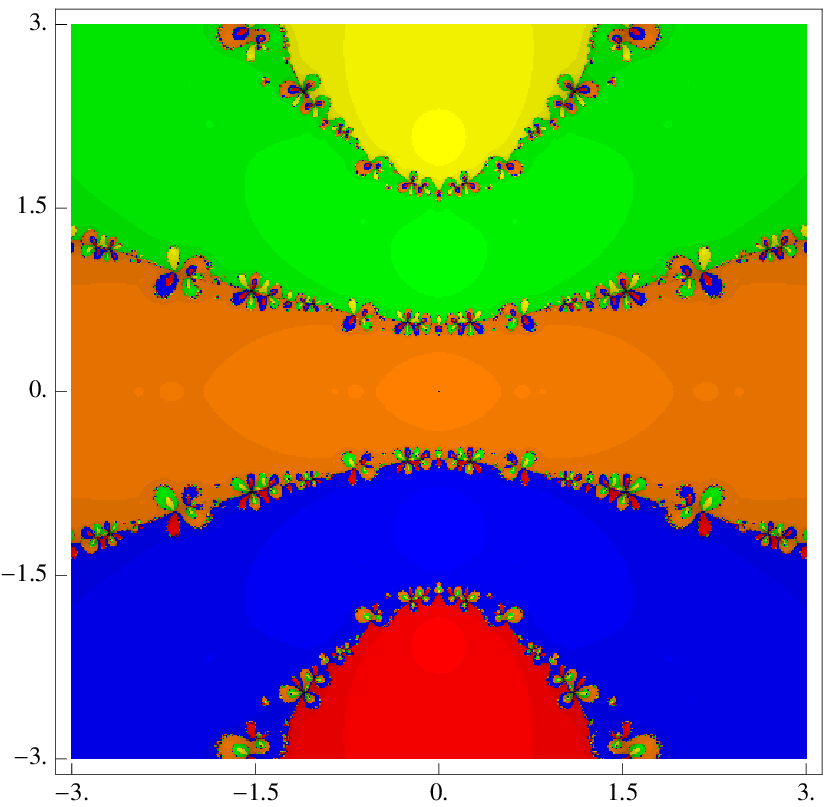}
\quad
\includegraphics[width=0.3\textwidth]{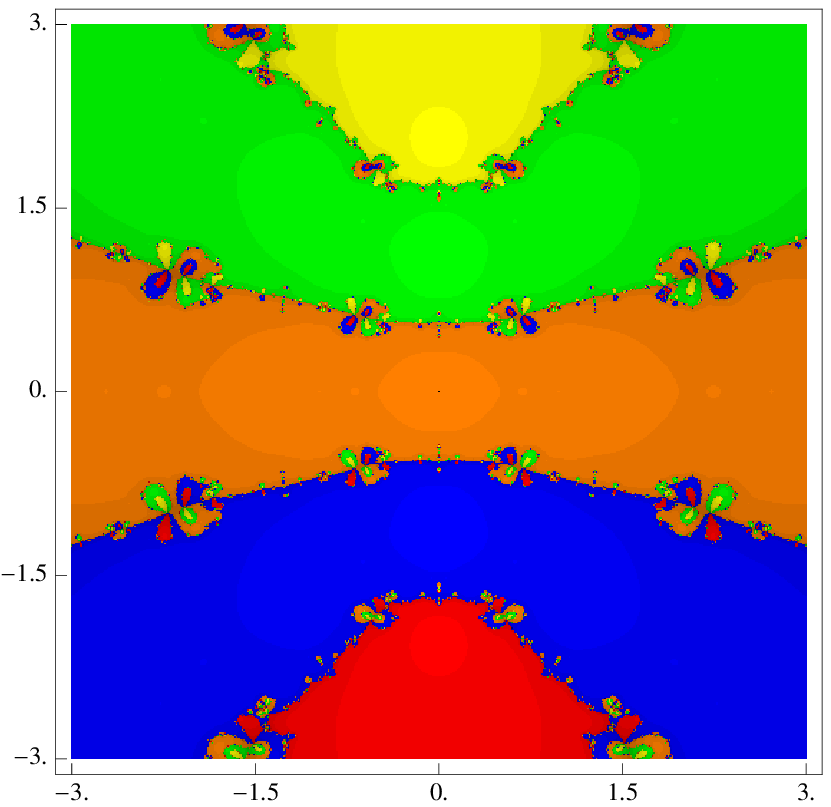}
\quad
\includegraphics[width=0.3\textwidth]{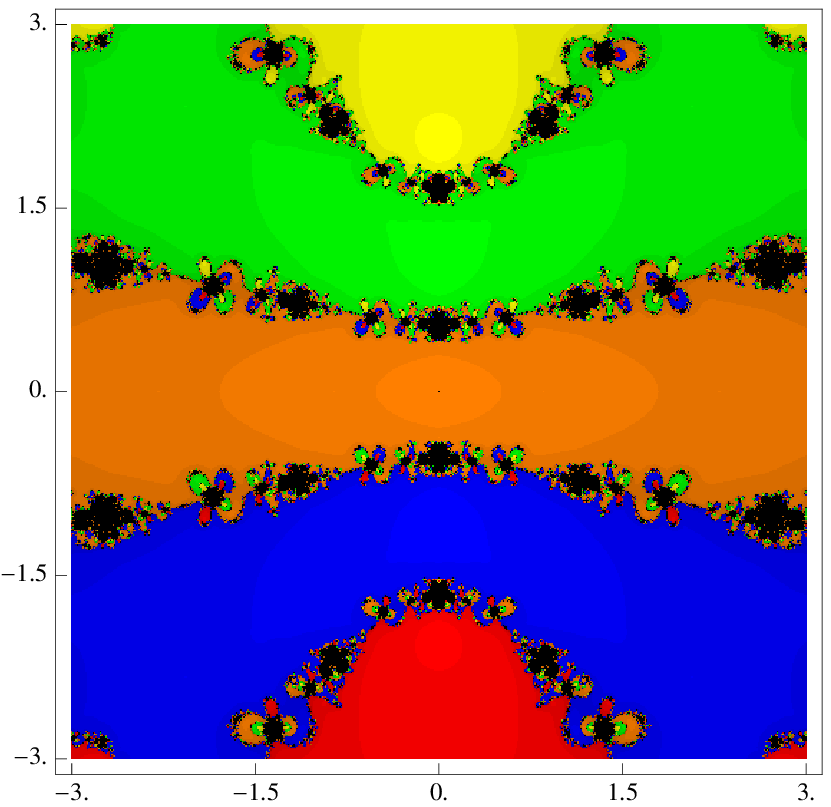}
\caption{Comparison of basins of attraction of methods
\eqref{n1}--\eqref{o4} for the test problem $p_3(z)=
z(z^2+1)(z^2+4)=0$.} 
\label{fig:figure3}
\end{figure}

\begin{figure}
\centering
\includegraphics[width=0.3\textwidth]{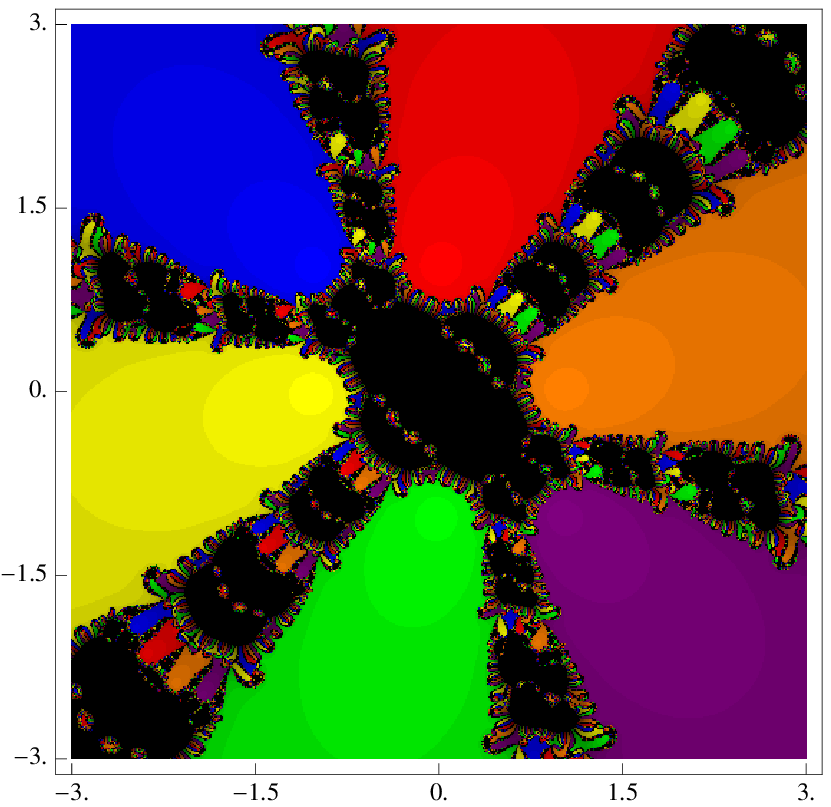}
\quad
\includegraphics[width=0.3\textwidth]{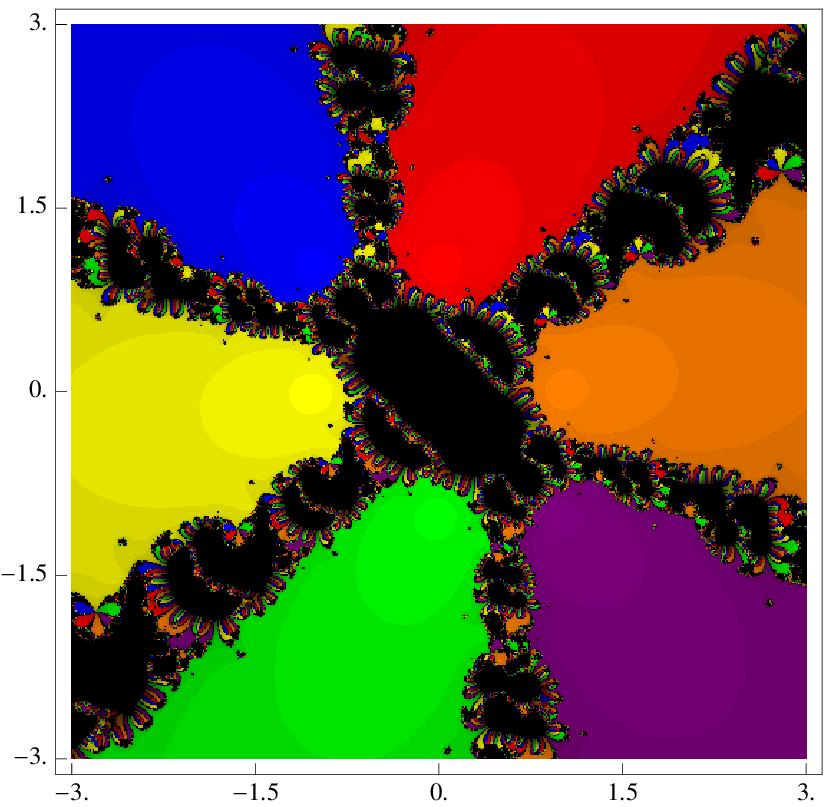}
\quad
\includegraphics[width=0.3\textwidth]{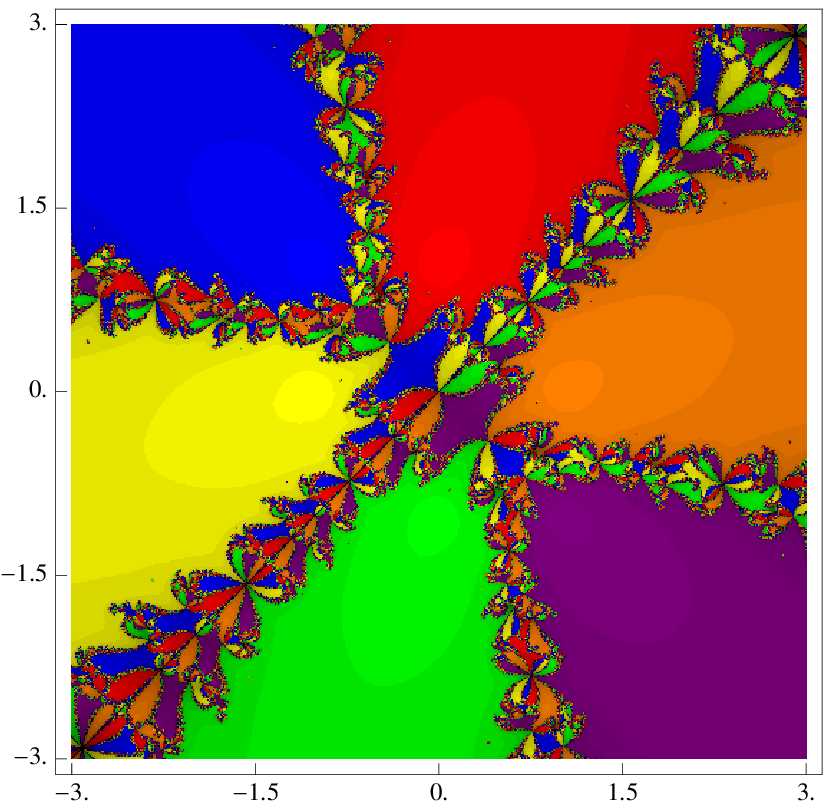}
\\
\includegraphics[width=0.3\textwidth]{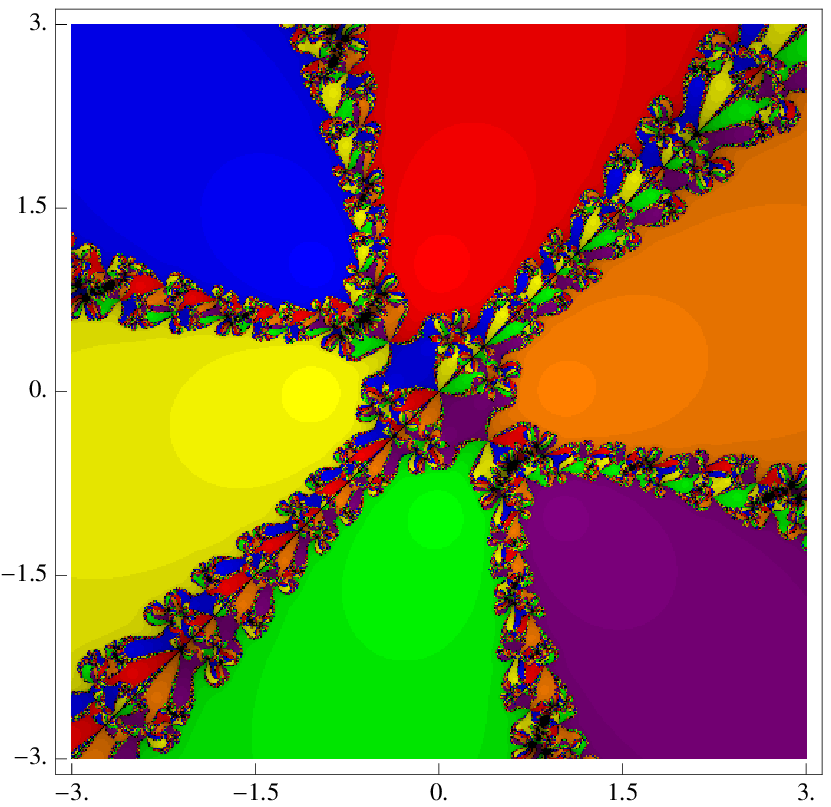}
\quad
\includegraphics[width=0.3\textwidth]{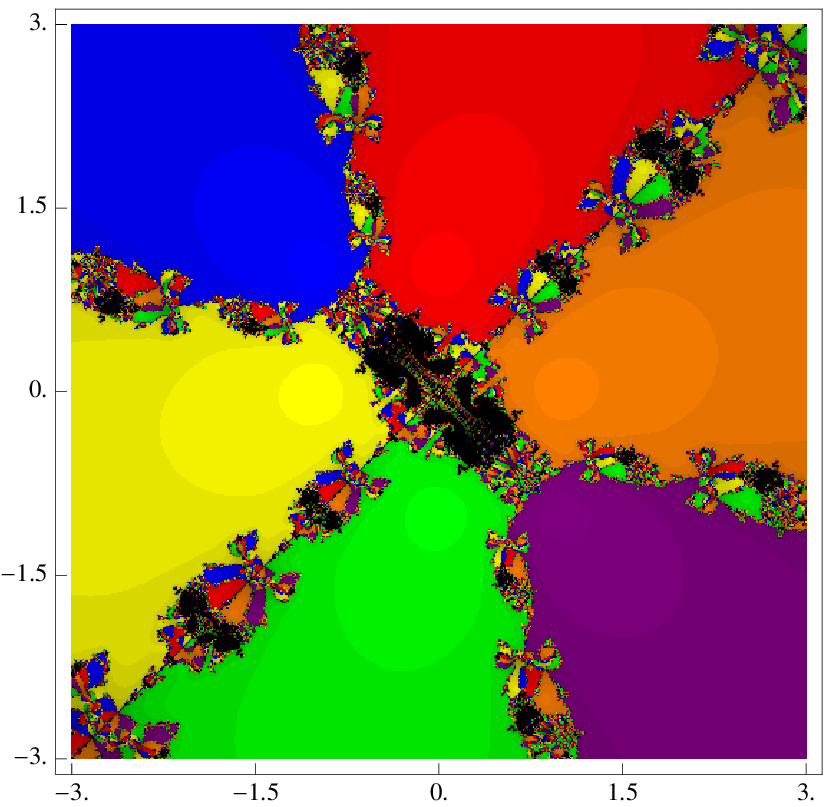}
\quad
\includegraphics[width=0.3\textwidth]{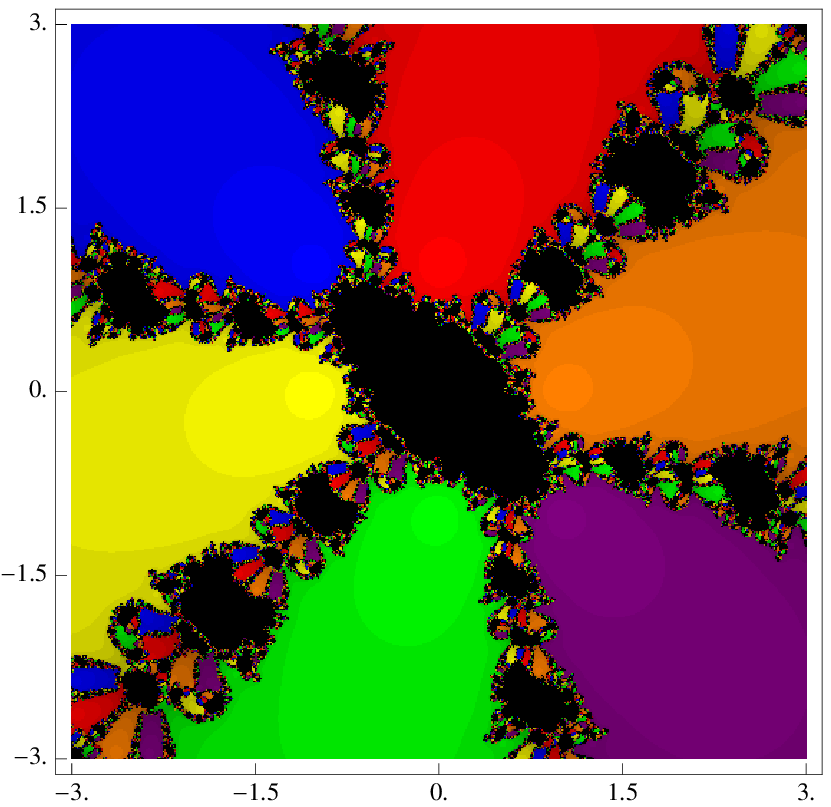}
\caption{Comparison of basins of attraction of methods
\eqref{n1}--\eqref{o4} for the test problem $p_4(z)=
(z^4-1)(z^2+2i)=0$.} 
\label{fig:figure4}
\end{figure}

From the pictures, we can easily judge the behavior and suitability
of any method depending on the circumstances.
If we choose an initial point $z_0$
in a zone where different basins of attraction touch each other, it is
impossible to predict which root is going to be reached by the
iterative method that starts in~$z_0$. Hence, it is
not a good choice. Both the black zones and the zones with a lot
of colors are not suitable to take the initial guess $z_0$ when we
want to achieve a precise root. The most attractive pictures appear
when we have very intricate frontiers between basins of attraction and
they correspond to the cases where the method is more demanding with
respect to the initial point and its dynamic behavior is more
unpredictable.

\begin{figure}
\centering
\includegraphics[width=0.3\textwidth]{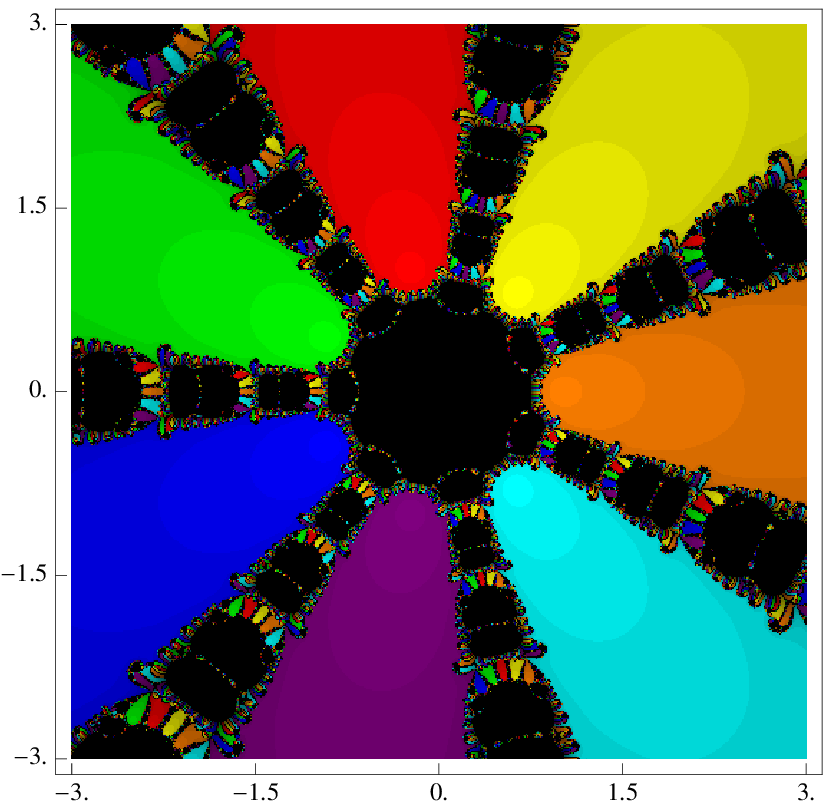}
\quad
\includegraphics[width=0.3\textwidth]{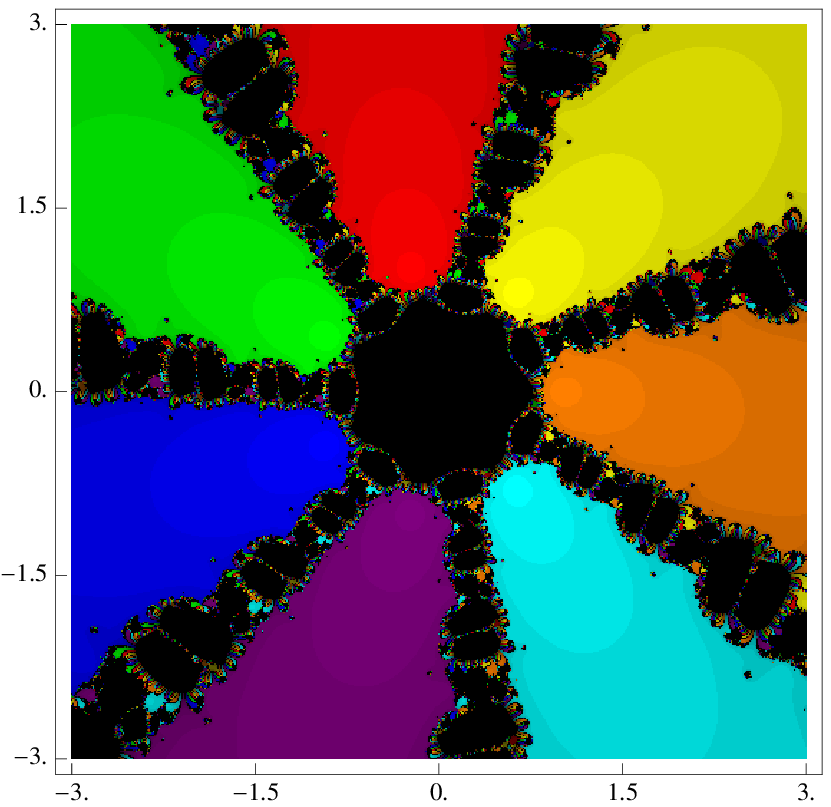}
\quad
\includegraphics[width=0.3\textwidth]{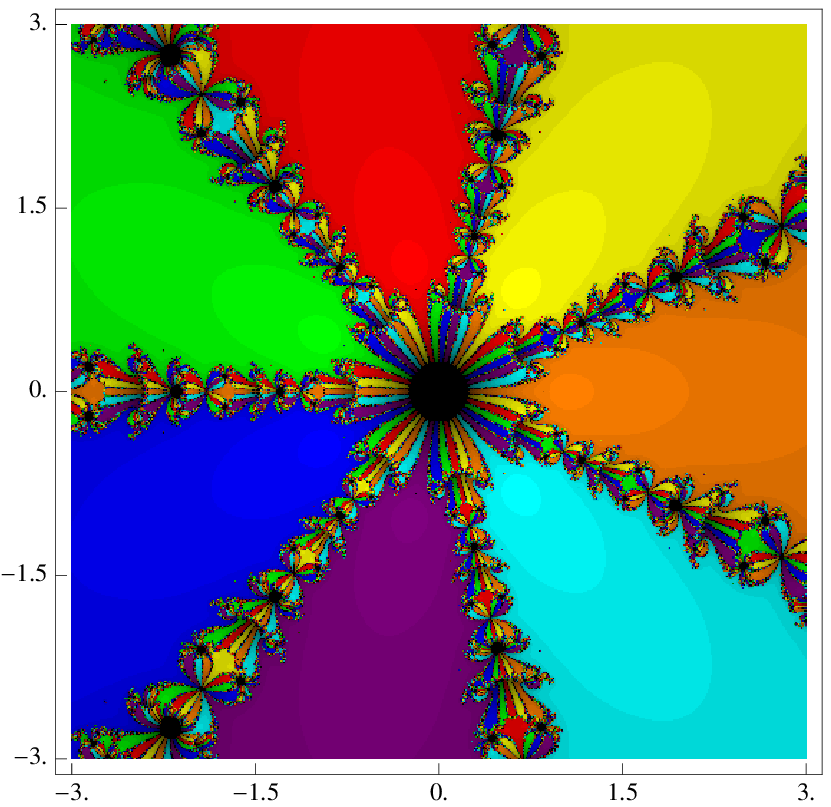}
\\
\includegraphics[width=0.3\textwidth]{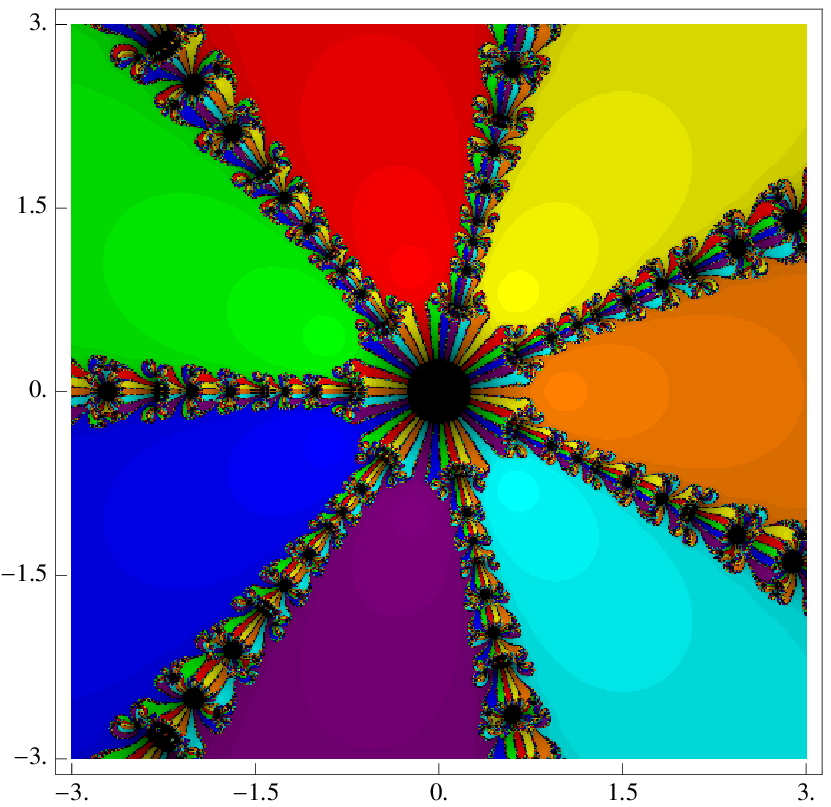}
\quad
\includegraphics[width=0.3\textwidth]{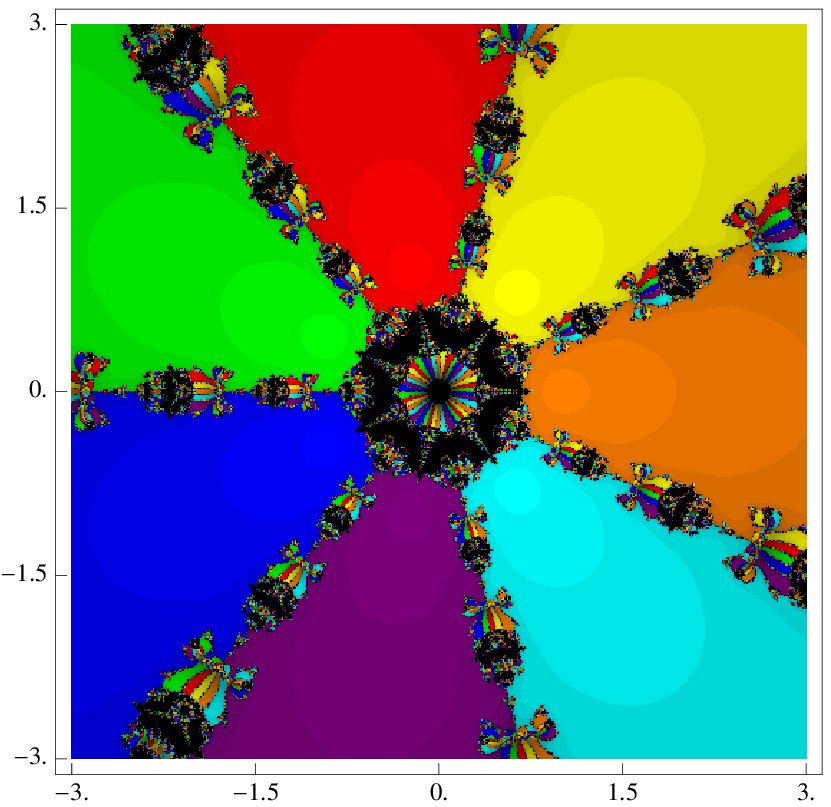}
\quad
\includegraphics[width=0.3\textwidth]{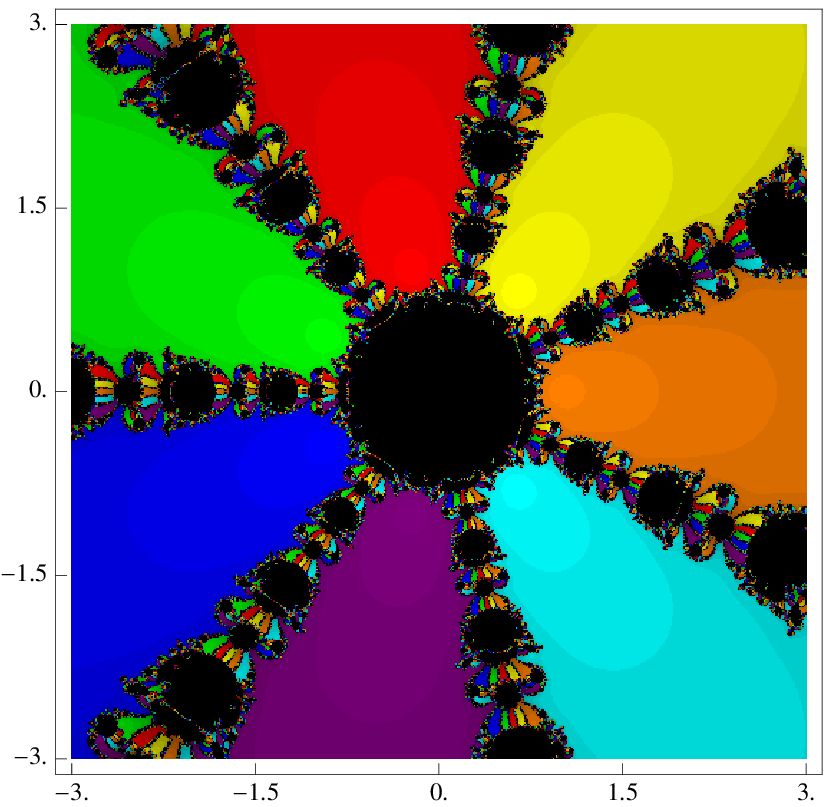}
\caption{Comparison of basins of attraction of methods
\eqref{n1}--\eqref{o4} for the test problem $p_5(z)= z^7-1=0$.}
\label{fig:figure5}
\end{figure}

\begin{figure}
\centering
\includegraphics[width=0.3\textwidth]{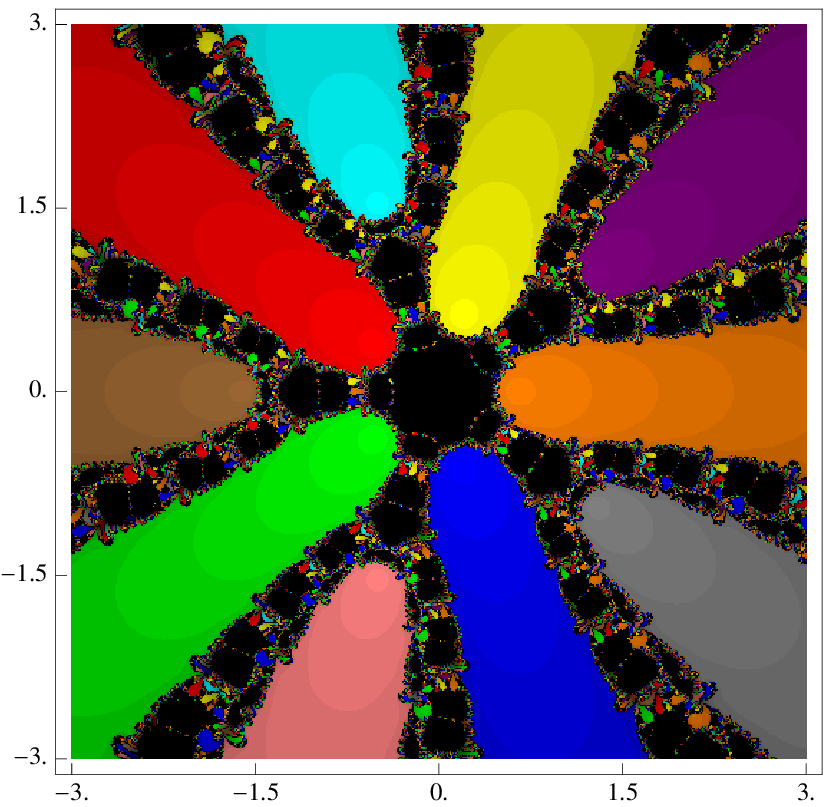}
\quad
\includegraphics[width=0.3\textwidth]{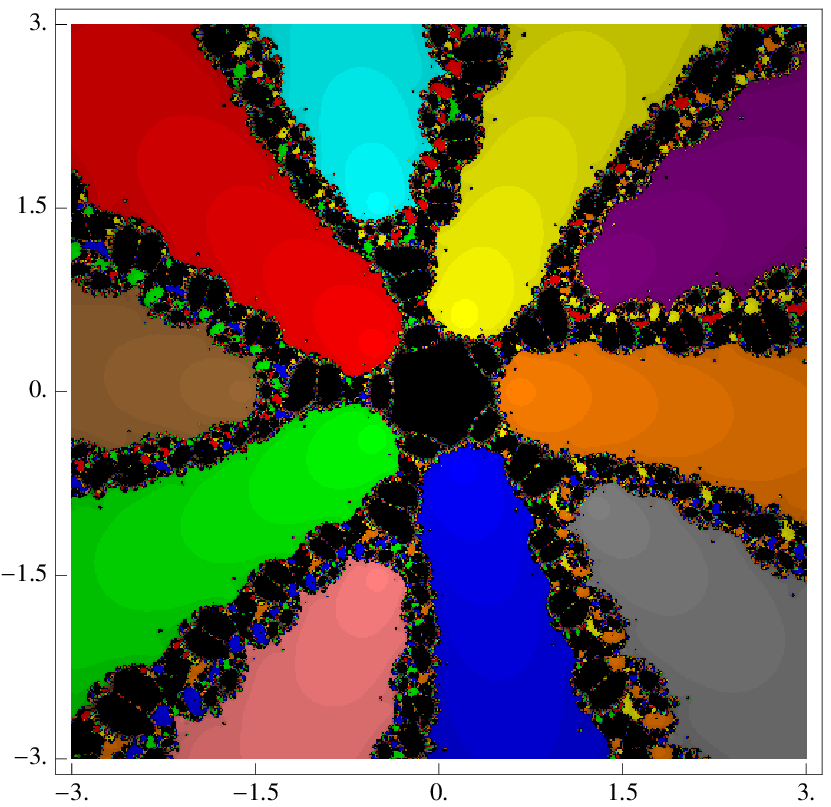}
\quad
\includegraphics[width=0.3\textwidth]{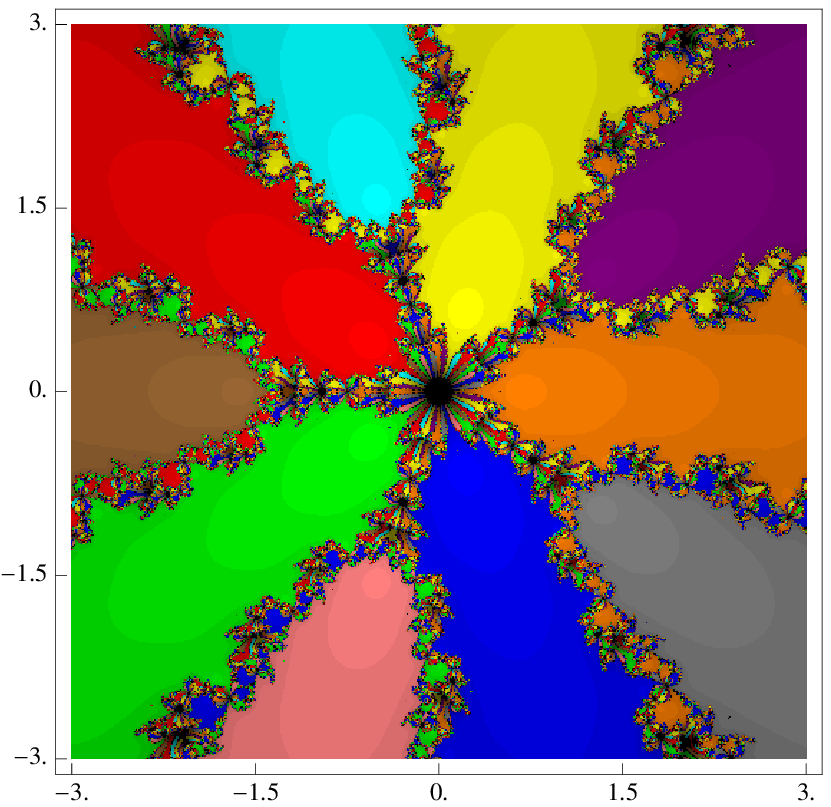}
\\
\includegraphics[width=0.3\textwidth]{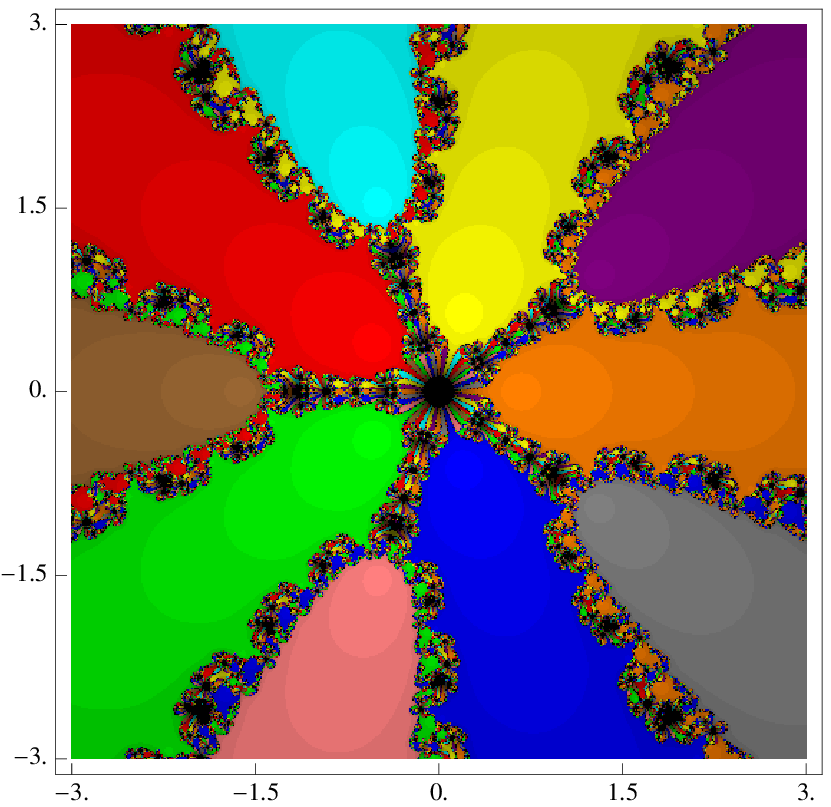}
\quad
\includegraphics[width=0.3\textwidth]{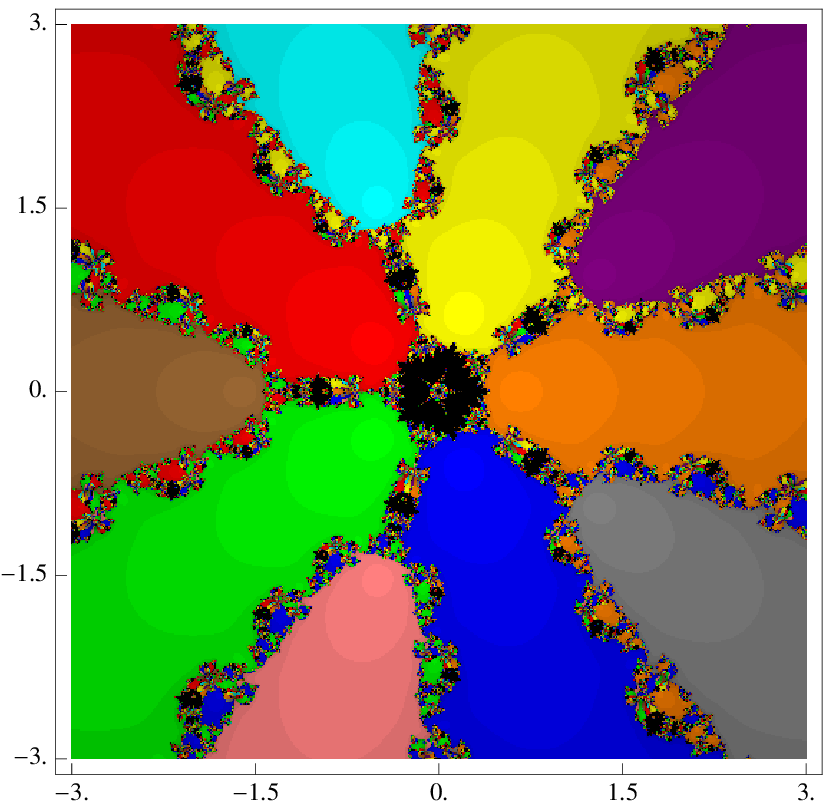}
\quad
\includegraphics[width=0.3\textwidth]{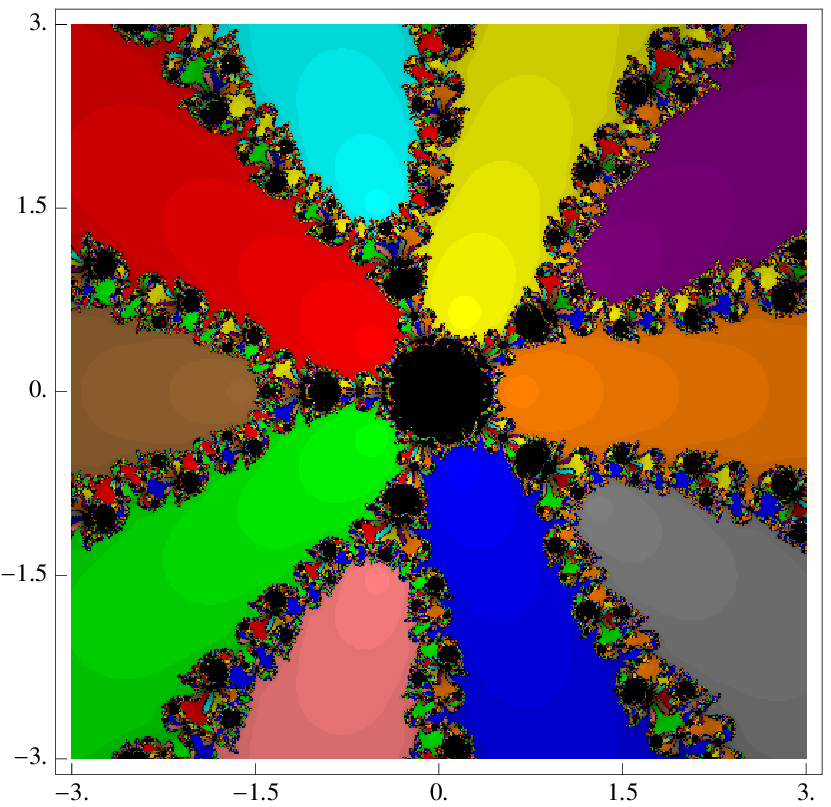}
\caption{Comparison of basins of attraction of methods
\eqref{n1}--\eqref{o4} for the test problem $p_6(z)=
(10z^5-1)(z^5+10)=0$.} 
\label{fig:figure6}
\end{figure}

The first and the second graphics in the
Figures~\ref{fig:figure1}--\ref{fig:figure6}
correspond to the same general method~\eqref{a3}
with different choices of the parameters $a,b,c$ in~\eqref{a36}
and~\eqref{a37}. We can see from the graphics in
Figure~\ref{fig:figure7} and the first two pictures of
Figure~\ref{fig:figure6} that even small changes in the parameters may lead
to completely different behaviors.

\begin{figure}
\centering
\includegraphics[width=0.3\textwidth]{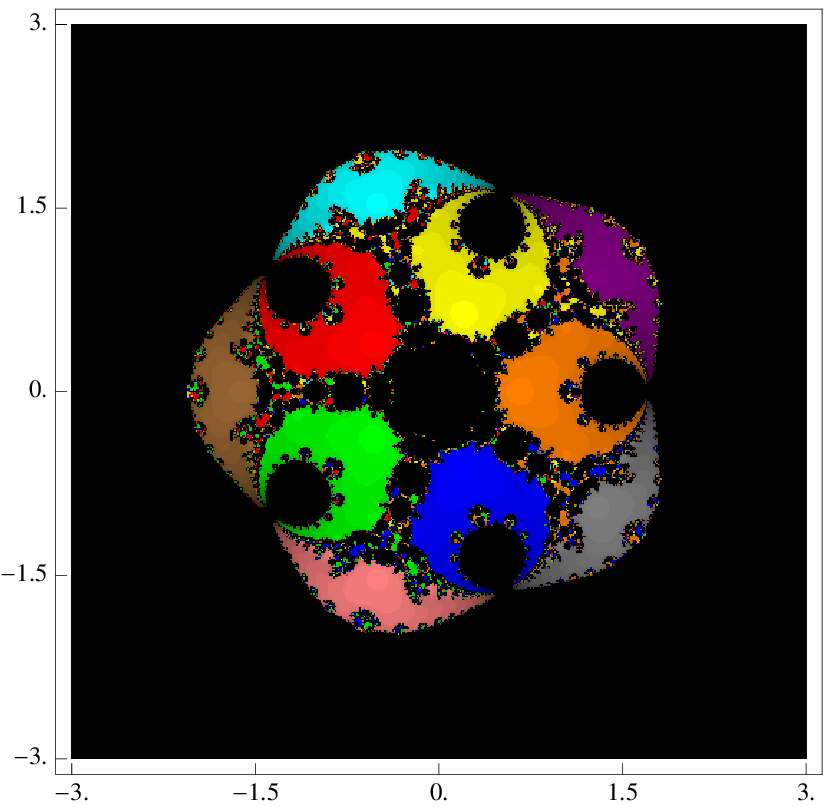}
\quad
\includegraphics[width=0.3\textwidth]{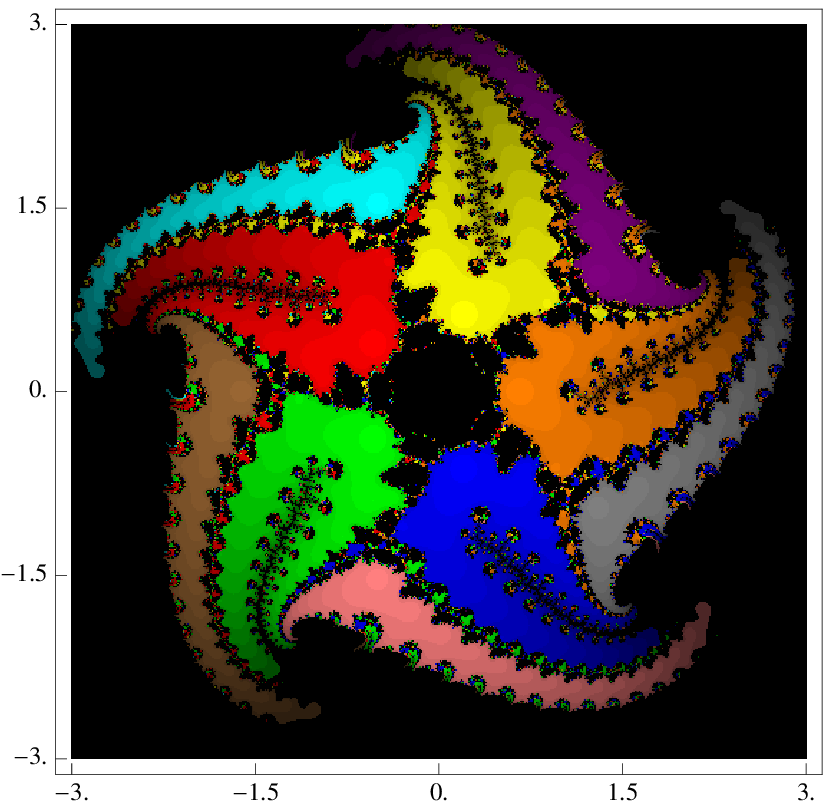}
\quad
\includegraphics[width=0.3\textwidth]{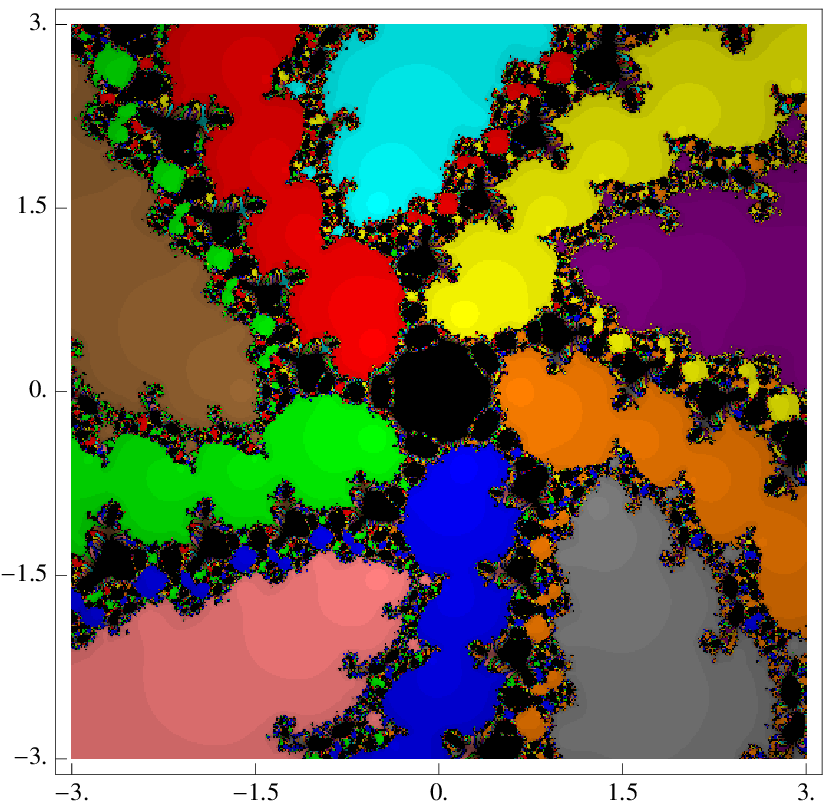}
\caption{Basins of attraction of the method of
Theorem~\ref{thm:main} to solve $p_6(z)= (10z^5-1)(z^5+10)=0$ with
different choices of the parameters $a$, $b$ and~$c$. Left:
$a=b=c=-1$. Center: $a=b=-1$, $c=-1+i$. Right: $a=-1/2$, $b=-1$,
$c=-2+i$.} 
\label{fig:figure7}
\end{figure}

Finally, we have included in Table~\ref{table5} the results of
some numerical experiments to measure the behavior of the six iterative
methods \eqref{n1}--\eqref{o4} in finding the roots of the test polynomials
$p_j(z)$, $j=1,\dots,6$. To compute the data of this table,
we have applied the six methods to the six polynomials, starting at an
initial points $z_0$ on a $512 \times 512$ grid in the rectangle
$[-3,3] \times [-3,3]$ of the complex plane. The same way was used in
Figures~\ref{fig:figure1}--\ref{fig:figure6} to show the basins
of attraction of the roots. In particular, we decide again that an
initial point $z_0$ has reached a root $z^*$ when its distance
to $z^*$ is less than $10^{-3}$ (in this case $z_0$ is in the
basin of attraction of~$z^*$) and we decide that the method
starting in $z_0$ diverges when no root is found in a maximum of
$15$ iterations of the method (in this case, we say that $z_0$ is
a ``nonconvergent point''). In Table~\ref{table5}, we have abbreviated the
methods \eqref{n1}--\eqref{o4} as M1--M6, respectively. The column
I/P shows the mean of iterations per point until the algorithm
decides that a root has been reached or the point is declared
nonconvergent. The column NC shows the percentage of nonconvergent
points, indicated as black zones in the figures. It is clear that the
nonconvergent points have a great influence on the values of I/P
since these points contribute always with the maximum number of $15$
allowed iterations, In contrast, ``convergent points'' are reached usually
very fast due to the fact that we are dealing with eighth-order methods.
To reduce the effect of nonconvergent points, we have included the column
I$_\textrm{C}$/C which shows the mean number of iterations per convergent
point. If we use either the columns I/P or the column I$_\textrm{C}$/C to
compare the performance of the iterative methods, we clearly obtain
different conclusions.


\begin{table}[htbp!]
\begin{center}
\begin{tabular}{cclll}
\toprule
Polynomial & Method & I/P & NC (\%) & I$_\textrm{C}$/C \\
\midrule
$p_1(z)$ & M1 & 2.53 & 0.244 & 2.50 \\
         & M2 & 2.29 & 0.00798 & 2.28 \\
         & M3 & 2.20 & 0.195 & 2.18 \\
         & M4 & 2.17 & 0.195 & 2.15 \\
         & M5 & 2.13 & 0.195 & 2.10 \\
         & M6 & 6.01 & 70.9 & 2.09 \\
\midrule
$p_2(z)$ & M1 & 3.54 & 0.798 & 3.45 \\
         & M2 & 3.10 & 0.340 & 3.06 \\
         & M3 & 2.88 & 0. & 2.88 \\
         & M4 & 2.82 & 0.00456 & 2.82 \\
         & M5 & 2.73 & 0. & 2.73 \\
         & M6 & 4.32 & 27.6 & 2.81 \\
\midrule
$p_3(z)$ & M1 & 3.88 & 3.57 & 3.47 \\
         & M2 & 3.57 & 2.19 & 3.31 \\
         & M3 & 2.99 & 0.0122 & 2.99 \\
         & M4 & 2.94 & 0.0334 & 2.94 \\
         & M5 & 2.82 & 0. & 2.82 \\
         & M6 & 3.28 & 5.46 & 2.99 \\
\bottomrule
\end{tabular}%
\quad
\begin{tabular}{cclll}
\toprule
Polynomial & Method & I/P & NC (\%) & I$_\textrm{C}$/C \\
\midrule
$p_4(z)$ & M1 & 6.85 & 24.7 & 4.17 \\
         & M2 & 6.48 & 22.0 & 4.07 \\
         & M3 & 4.07 & 0.888 & 3.97 \\
         & M4 & 4.21 & 1.84 & 4.01 \\
         & M5 & 3.95 & 4.40 & 3.44 \\
         & M6 & 4.45 & 20.1 & 3.56 \\
\midrule
$p_5(z)$ & M1 & 7.27 & 27.0 & 4.42 \\
         & M2 & 7.00 & 25.2 & 4.30 \\
         & M3 & 4.81 & 3.36 & 4.45 \\
         & M4 & 5.07 & 5.71 & 4.47 \\
         & M5 & 4.59 & 7.04 & 3.80 \\
         & M6 & 5.03 & 21.4 & 4.02 \\
\midrule
$p_6(z)$ & M1 & 7.36 & 24.4 & 4.90 \\
         & M2 & 6.96 & 21.7 & 4.73 \\
         & M3 & 4.69 & 2.33 & 4.44 \\
         & M4 & 4.89 & 4.03 & 4.46 \\
         & M5 & 4.44 & 3.98 & 4.01 \\
         & M6 & 5.26 & 11.9 & 4.70 \\
\bottomrule
\end{tabular}
\caption{Measures of convergence of the iterative methods
\eqref{n1}--\eqref{o4} (abbreviated as M1--M6) applied to find the
roots of the polynomials $p_j(z)$, $j=1,\dots,6$.}
\label{table5}
\end{center}
\end{table}

\section{Conclusion}
\label{sec:conclusion}

We have introduced a new optimal class of three-point methods without
memory for approximating a simple root of a given nonlinear
equation which use only four function evaluations each iteration and result
in a method of convergence order eight. Therefore, the Kung and
Traub's conjecture is supported. Numerical examples and
comparisons with some existing eighth-order methods are included and
confirm the theoretical results. The numerical experience
suggests that the new class is a valuable alternative for
solving these problems and finding simple roots. We used the basins
of attraction for comparing the iterative algorithms and we have included
some tables with comparative results.



\begin{thebibliography}{99}

\small





\bibitem{Amat5}
Amat, S., Busquier, S., Magre\~n\'an, \'A.A.: Reducing chaos and
bifurcations in Newton-type methods, Abstr. Appl. Anal. 2013,
Art.\ ID 726701, 10 pages (2013).

\bibitem{Babajee}
Babajee, D.K.R., Cordero, A., Soleymani, F., Torregrosa, J.R.: On
improved three-step schemes with high efficiency index and their
dynamics, Numer. Algorithms 65, 153--169 (2014).

\bibitem{Bi}
Bi, W., Ren, H., Wu, Q.: Three-step iterative methods with
eighth-order convergence for solving nonlinear equations, J.
Comput. Appl. Math. 225, 105--112 (2009).

\bibitem{Chicharro}
Chicharro, F., Cordero, A., Guti\'errez, J.M., Torregrosa, J.R.:
Complex dynamics of derivative-free methods for nonlinear
equations, Appl. Math. Comput. 219, 7023--7035 (2013).

\bibitem{Chun1}
Chun, C., Lee, M.Y.: A new optimal eighth-order family of
iterative methods for the solution of nonlinear equations, Appl.
Math. Comput. 223, 506--519 (2013).

\bibitem{acoc}
Cordero, A., Torregrosa, J.R.: Variants of Newton''s method using
fifth-order quadrature formulas, Appl. Math. Comput. 190, 686--698
(2007).

\bibitem{Ezquerro2}
Ezquerro, J.A., Hern\'andez, M.A.: An improvement of the region of
accessibility of Chebyshev's method from Newton's method, Math.
Comp. 78, 1613--1627 (2009).

\bibitem{Ezquerro1}
Ezquerro, J.A., Hern\'andez, M.A.: An optimization of Chebyshev's
method, J. Complexity 25, 343--361 (2009).

\bibitem{acoc-et-al}
Grau-S\'anchez, M., Noguera, M., Guti\'errez, J.M.: On some
computational orders of convergence, Appl. Math. Lett. 23(4),
472--478 (2010).

\bibitem{GutiMaVar}
Guti\'errez, J.M., Magre\~n\'an, \'A.A., Varona, J.L.: The
``Gauss-Seidelization'' of iterative methods for solving nonlinear
equations in the complex plane, Appl. Math. Comput. 218,
2467--2479 (2011).


\bibitem{Paricio}
Hern\'andez-Paricio, L.J., Mara\~n\'on-Grandes, M.,
Rivas-Rodr\'iguez, M.T.: Plotting basins of end points of rational
maps with Sage, Tbil. Math. J. 5(2), 71--99 (2012).

\bibitem{Jarratt}
Jarratt, P.: Some fourth order multipoint iterative methods for
solving equations, Math. Comp. 20, 434--437 (1966).

\bibitem{King}
King, R.F.: A family of fourth order methods for nonlinear
equations, SIAM J. Numer. Anal. 10, 876--879 (1973).

\bibitem{Kung}
Kung, H.T., Traub, J.F.: Optimal order of one-point and multipoint
iteration, J. Assoc. Comput. Mach. 21, 634--651 (1974).

\bibitem{Lotfi2}
Lotfi, T., Sharifi, S., Salimi, M., Siegmund, S.: A new class of
three-point methods with optimal convergence order eight and its
dynamics, Numer. Algorithms 68, 261--288 (2015).

\bibitem{Neta0}
Neta, B.: On a family of multipoint methods for nonlinear
equations, Internat. J. Comput. Math. 9, 353--361 (1981).

\bibitem{Neta1}
Neta, B., Chun, C., Scott, M.: Basins of attraction for optimal
eighth order methods to find simple roots of nonlinear equations,
Appl. Math. Comput. 227, 567--592 (2014).

\bibitem{Ostrowski}
Ostrowski, A.M.:
Solution of Equations and Systems of Equations, 2nd ed.,
Academic Press, New York (1966).

\bibitem{Petkovic}
Petkovi\'c, M.S., Neta, B., Petkovi\'c, L.D., D\v{z}uni\'c, J.:
Multipoint Methods for Solving Nonlinear Equations,
Elsevier/Academic Press, Amsterdam (2013).


\bibitem{Sharifi1}
Sharifi, S., Ferrara, M., Salimi, M., Siegmund, S.: New
modification of Maheshwari method with optimal eighth order of
convergence for solving nonlinear equations, preprint (2015).

\bibitem{Sharifi2}
Sharifi, S., Siegmund, S., Salimi, M.: Solving nonlinear equations
by a derivative-free form of the King's family with memory,
Calcolo, doi: 10.1007/s10092-015-0144-1 (2015).

\bibitem{Sharma1}
Sharma, J.R., Sharma, R.: A new family of modified Ostrowski's
methods with accelerated eighth order convergence, Numer.
Algorithms 54, 445--458 (2010).

\bibitem{Stewart}
Stewart, B.D.:
Attractor Basins of Various Root-Finding Methods,
M.S. thesis, Naval Postgraduate School,
Monterey, CA (2001).

\bibitem{Traub}
Traub, J.F.:
Iterative Methods for the Solution of Equations,
Prentice Hall, Englewood Cliffs, N.J. (1964).

\bibitem{Varona}
Varona, J.L.: Graphic and numerical comparison between iterative
methods, Math. Intelligencer 24(1), 37--46 (2002).


\bibitem{Wang}
Wang, X., Liu, L.: New eighth-order iterative methods for solving
nonlinear equations, J. Comput. Appl. Math. 234, 1611--1620
(2010).

\bibitem{coc}
Weerakoon, S., Fernando, T.G.I.: A variant of Newton's method with
accelerated third-order convergence, Appl. Math. Lett. 13(8),
87--93 (2000).

\end{thebibliography}
\end{document}